\providecommand{\U}[1]{\protect\rule{.1in}{.1in}}
\newtheorem{theorem}{Theorem}[section]
\newtheorem{definition}[theorem]{Definition}
\newtheorem{example}[theorem]{Example}
\newtheorem{lemma}[theorem]{Lemma}
\newtheorem{proposition}[theorem]{Proposition}
\newenvironment{proof}[1][Proof]{\noindent\textbf{#1.} }{\ \rule{0.5em}{0.5em}}
\begin{document}

\title{Ground state solutions for a semilinear elliptic problem\\with critical-subcritical growth}
\author{C.O. Alves$^{\text{a}}$, G. Ercole$^{\text{b}}$\thinspace, M.D. Huam\'{a}n
Bola\~{n}os$^{\text{b}}$\\{\small {$^{\mathrm{a}}$} Universidade Federal de Campina Grande, Campina
Grande, PB, 58.109-970, Brazil. }\\{\small E-mail: coalves@mat.ufcg.edu.br}\\{\small {$^{\mathrm{b}}$} Universidade Federal de Minas Gerais, Belo
Horizonte, MG, 30.123-970, Brazil.}\\{\small E-mail:grey@mat.ufmg.br }\\{\small E-mail: emdi\_82@hotmail.com}}
\maketitle

\begin{abstract}
In this work, we study the of positive ground state solution for the semilinear elliptic problem
\[
\left\{
\begin{array}
[c]{ll}%
-\Delta u=u^{p(x)-1},\quad u>0 & \mathrm{in}\,G\subseteq\mathbb{R}^{N}%
,\,N\geq3\\
u\in D_{0}^{1,2}(G), &
\end{array}
\right.
\]
where $G$ is either $\mathbb{R}^{N}$ or a bounded domain, and $p:G\rightarrow
\mathbb{R}$ is a continuous function assuming critical and subcritical values.

\end{abstract}

\section{Introduction}

This paper concerns with the existence of positive ground state solutions for the
semilinear elliptic problem%

\begin{equation}
\left\{
\begin{array}
[c]{ll}%
-\Delta u=u^{p(x)-1},\,u>0 & \mathrm{in}\quad G,\\
u\in D_{0}^{1,2}(G), &
\end{array}
\right.  \tag{P}\label{int1}%
\end{equation}
where either $G=\mathbb{R}^{N}$ and $D_{0}^{1,2}(G)=D^{1,2}(\mathbb{R}^{N})$
or $G$ is a bounded domain in $\mathbb{R}^{N}$ and $D_{0}^{1,2}(G)=H_{0}%
^{1}(G).$ In both cases, $N\geq3$ and $p:G\rightarrow\mathbb{R}$ is a
continuous function satisfying the following condition:

\begin{description}
\item[($\mathrm{H}_{1}$)] There exist a bounded set $\Omega\subset G,$ with
positive $N$-dimensional Lebesgue measure, and positive constants $p^{-},$
$p^{+},$ and $\delta$ such that:%
\begin{equation}
2<p^{-}\leq p(x)\leq p^{+}<2^{\ast}\quad\forall\,x\in\Omega, \tag{$H_1a$}%
\end{equation}%
\begin{equation}
p(x)\equiv2^{\ast}\quad\forall\,x\in G\setminus\Omega_{\delta},\quad
\tag{$H_1b$}%
\end{equation}%
and 
\begin{equation}
2<p^{-}\leq p(x)<2^{\ast}\quad\forall\,x\in\Omega_{\delta}, \tag{$H_1c$}%
\end{equation}
where
\[
\Omega_{\delta}:=\{x\in G:\operatorname{dist}(x,\overline{\Omega})\leq
\delta\}.
\]
$\quad$
\end{description}

There are several works in the literature dealing with semilinear problems
with $p(x)\equiv2^{\ast}$. Let us mention some of them.

In \cite{P65}, Pohozaev showed that the problem%
\begin{equation}
\left\{
\begin{array}
[c]{ll}%
-\Delta u=\lambda u+|u|^{2^{\ast}-2}u,\quad u>0 & \mathrm{in}\quad G,\\
u\in H_{0}^{1}(G), &
\end{array}
\right.  \tag{$P_1$}\label{int3}%
\end{equation}
does not admit a non-trivial solution if $\lambda\leq0$ and the bounded domain
$G$ is strictly star-shaped with respect to the origin in $\mathbb{R}^{N}$,
$N\geq3.$

In \cite{BN83}, Brezis and Nirenberg showed that if $N \geq 4$, the problem $(P_1)$ has a positive
solution for every $\lambda \in (0, \lambda_1)$, where $\lambda_1$ denotes the first eigenvalue of $(-\Delta, H_0^{1}(\Omega))$. If $N = 3$,
they proved that there exists $\lambda_* \in [0, \lambda_1)$ such that for any $\lambda \in (\lambda_*,\lambda_1)$ the problem $(P_1)$ 
admits a positive solution and, in the particular case where $G$ is a ball, a positive solution exists if, and only if, $\lambda \in (\lambda_1 /4, \lambda_1)$

Brezis and Nirenberg showed that if $G=B_{1}(0)$ and $N=3$,
then there exists $\lambda_{\ast}>0$ such that (\ref{int3}) does not have
solution for $\lambda\leq\lambda_{\ast}.$

In \cite{C84}, Coron proved that if there exist $R,r>0$ such that%
\[
G\supset\{x\in\mathbb{R}^{N}:r<\left\vert x\right\vert <R\}\quad
\mathrm{and}\quad\overline{G}\not \supset \{x\in\mathbb{R}^{N}:\left\vert
x\right\vert <r\}
\]
and $\frac{R}{r}$ is sufficiently large, then the problem (\ref{int3}) with
$\lambda=0$ has a positive solution in $H_{0}^{1}(G)$.

In \cite{BC88}, Bahri and Coron showed that if $\lambda=0$ and $\mathcal{H}%
_{i}(G;\mathbb{Z}/2)\neq0$ ($i$-th homology group) for some $i>0$, then the
problem (\ref{int3}) has at least one positive solution. The condition on the
homology group is valid, for example, if $\partial G$ is not connected.

Existence results for (\ref{int3}) related to the topology of $G$ were also
obtained by Bahri, in \cite{B89}. In
\cite{CCL92}, Carpio, Comte and Lewandowski obtained nonexistence results for
(\ref{int3}), with $\lambda=0,$ in contractible nonstarshaped domains.

On the other hand, the subcritical problem%
\begin{equation}
\left\{
\begin{array}
[c]{ll}%
-\Delta u=|u|^{q-2}u & \mathrm{in}\quad G,\\
u=0 & \mathrm{on}\quad\partial G,
\end{array}
\right.  \tag{$P_2$}\label{int5}%
\end{equation}
where $2<q<2^{\ast}$, has an unbounded set of solutions in $H_{0}^{1}(G)$ (See
\cite{GP87}). The problem (\ref{int5}) with $q=2^{\ast}-\epsilon$
($\epsilon>0$) was studied in the papers \cite{AP87} and \cite{GP92}. In
former, the authors considered $\Omega$ a ball and determined the exact
asymptotic behavior of the corresponding (radial) solutions $u_{\epsilon},$ as
$\epsilon\rightarrow0$. In \cite{GP92}, where a general bounded domain was
considered, the authors provided an alternative for the asymptotic behavior of
$u_{\epsilon},$ as $\epsilon\rightarrow0.$ More precisely, they showed that if
$\frac{1}{N}S^{\frac{N}{2}}<\lim_{\epsilon\rightarrow0}J_{\epsilon
}(u_{\epsilon})<\frac{2}{N}S^{\frac{N}{2}},$ then $u_{\epsilon}$ converges to
either a Dirac mass or a solution of critical problem (i.e (\ref{int5}) with
$q=p^{\ast}$). Here, $J_{\epsilon}$ denotes the energy functional associated
with the problem (\ref{int5}) and $q=2^{\ast}-\epsilon$, and
\begin{equation}
S:=\inf\left\{  \frac{\left\Vert u\right\Vert _{1,2}^{2}}{\left\Vert
u\right\Vert _{2^{\ast}}^{2}}:u\in D^{1,2}(\mathbb{R}^{N})\setminus\left\{
0\right\}  \right\}  \label{prb0}%
\end{equation}
is the Sobolev constant, which is given by the expression
\[
S:=\pi N(N-2)\left( \frac{\Gamma(N/2)}{\Gamma(N)}\right) ^{\frac{2}{N}},
\]
where $\Gamma(t)=\int_{0}^{\infty}s^{t-1}e^{-s}$\textrm{$d$}$s$ is the Gamma
Function (see Aubin \cite{Aubin} and Talenti\cite{T76}).

In \cite{KS08}, the compactness of the embedding $H_{0}^{1}(G)\hookrightarrow
L^{p(x)}(G),$ for a bounded domain $G$ and variable exponent $1\leq
p(x)\leq2^{\ast},$ was studied (for the definition and properties of
$L^{p(x)}(G)$ see \cite{FZ01}). It was showed the existence of a positive
solution of (\ref{int1}) under the hypothesis of existence of a point
$x_{0}\in G$, a small $\eta>0$, $0<l<1$ and $c_{0}>0$ such that $p(x_{0}%
)=2^{\ast}$ and%
\[
p(x)\leq2^{\ast}-\frac{c_{0}}{(\log(1/\left\vert x-x_{0}\right\vert ))^{l}%
},\quad\ \left\vert x-x_{0}\right\vert \leq\eta.
\]

When $G=\mathbb{R}^{N}$ and $p(x)\equiv2^{\star}$ the equation (\ref{int1})
becomes%
\begin{equation}
\left\{
\begin{array}
[c]{ll}%
-\Delta u=u^{2^{\ast}-1},\quad u>0 & \mathrm{in}\quad\mathbb{R}^{N}\\
u\in D^{1,2}(\mathbb{R}^{N}). &
\end{array}
\right.  \tag{$P_3$}\label{int4}%
\end{equation}
It is well known that the function
\[
w(x)=\frac{C_{N}}{(1+\left\vert x\right\vert ^{2})^{\frac{N-2}{2}}},\quad
C_{N}:=[N(N-2)]^{\frac{N-2}{4}},
\]
is a ground state solution of (\ref{int4}) and satisfies%
\[
\int_{\mathbb{R}^{N}}\left\vert \nabla w\right\vert ^{2}\mathrm{d}%
x=\int_{\mathbb{R}^{N}}\left\vert w\right\vert ^{2^{\ast}}\mathrm{d}%
x=S^{N/2}.
\]

In \cite{AS06} was studied the existence of nonnegative solutions of
$-\operatorname{div}(\left\vert \nabla u\right\vert ^{p(x)-2}\nabla
u)=u^{q(x)-1}$ in $\mathbb{R}^{N},$ where the variable exponents $p(x)$ and
$q(x)$ are radially symmetric functions satisfying $1<\operatorname*{essinf}%
_{\mathbb{R}^{N}}p(x)\leq\operatorname*{esssup}_{\mathbb{R}^{N}}p(x)<N$,
$p(x)\leq q(x)\leq2^{\ast}$ and
\[
p(x)=2,\quad q(x)=2^{\ast}\quad\mathrm{if\,either}\quad\left\vert x\right\vert
\leq\delta\quad\mathrm{or}\quad\left\vert x\right\vert \geq R,
\]
for constants $0<\delta<R.$

Finally, in \cite{LL15}, Liu, Liao and Tang proved the existence of a ground
state solution for (\ref{int1}) with $G=\mathbb{R}^{N}$ and $p$ given by
\[
p(x)=\left\{
\begin{array}
[c]{lll}%
p & \mathrm{if} & x\in\Omega\\
2^{\ast} & \mathrm{if} & x\in\mathbb{R}^{N}\setminus\Omega,
\end{array}
\right.
\]
where the constant $p$ belongs to $(2,2^{\ast})$ and $\Omega\subset
\mathbb{R}^{N}$ with nonempty interior.

In Section \ref{Sec3}, motivated by the results of \cite{LL15}, we use some properties of the 
Nehari manifold to show that the problem (\ref{int1}) has at least one
ground state solution when $G=\mathbb{R}^{N}$ and $p\in C(\mathbb{R}%
^{N},\mathbb{R})$ is a function satisfying the condition ($H_{1}$).

In Section \ref{Sec4}, we continue the study of (\ref{int1}), but assuming
that $G$ is a bounded domain in $\mathbb{R}^{N}$ and that the function $p\in
C(G,\mathbb{R}),$ satisfying \textrm{(}$\mathrm{H}_{1}$\textrm{)}, also verifies

\begin{description}
\item[($\mathrm{H}_{2}$)] There exists a subdomain $U$ of $\Omega$ such that
$S_{2}(U)<1$ and%
\[
p(x)\equiv q,\quad\forall\ x\in U,
\]
where: $\Omega$ is the set defined in \textrm{(}$\mathrm{H}_{1}$\textrm{)},
$p^{-}\leq q<\min\{\bar{q},p^{+}\}$ for some $\bar{q}\in(2,2^{\ast}]$ (which
will be defined later) and $S_{2}(U)$ is the best constant of the embedding
$H_{0}^{1}(U)\hookrightarrow L^{2}(U)$.
\end{description}

Under such conditions, we show that the problem (\ref{int1}) has at least one
ground state solution and also present sufficient conditions for $S_{2}(U)<1$
to hold, when the subdomain $U$ is either a ball $B_{R}$ or an annular-shaped
domain $B_{R}\setminus\overline{B_{r}}$, with $\overline{B_{r}}\subset B_{R}.$
Moreover, we show that if $R$ and $R-r$ are sufficiently large, then
$S_{2}(U)<1$ for $U=B_{R}$ and $U=B_{R}\setminus\overline{B_{r}}$, respectively.

We believe it is possible to find further conditions that assure the existence
of at least one solution for (\ref{int1}) in the case where $G$ is a bounded
domain, and we hope to return to this subject in the future.

\section{The semilinear elliptic problem in $\mathbb{R}^{N}$\label{Sec3}}

In this section, we consider the semilinear elliptic problem with variable
exponent
\begin{equation}
\left\{
\begin{array}
[c]{ll}%
-\Delta u=u^{p(x)-1},\quad u>0 & \mathrm{in}\quad\mathbb{R}^{N}\\
u\in D^{1,2}(\mathbb{R}^{N}), &
\end{array}
\right.  \label{prb1}%
\end{equation}
where $N\geq3$ and $p:\mathbb{R}^{N}\rightarrow\mathbb{R}$ is a continuous
function verifying the hypothesis $\mathrm{(H}_{1}\mathrm{)}$.

We recall that the space $D^{1,2}(\mathbb{R}^{N})$ is the completion of
$C_{0}^{\infty}(\mathbb{R}^{N})$ with respect to the norm%

\[
\left\Vert u\right\Vert _{1,2}:=\left(  \int_{\mathbb{R}^{N}}|\nabla
u|^{2}\,\mathrm{d}x\right)  ^{\frac{1}{2}}.
\]
The dual space of $D^{1,2}(\mathbb{R}^{N})$ will be denoted by $D^{-1}$.

The energy functional $I:D^{1,2}(\mathbb{R}^{N})\rightarrow\mathbb{R}$
associated with (\ref{prb1}) is given by%
\[
I(u)=\frac{1}{2}\left\Vert u\right\Vert _{1,2}^{2}-\int_{\mathbb{R}^{N}}%
\frac{1}{p(x)}(u^{+})^{p(x)}\,\mathrm{d}x,
\]
where $u^{+}(x)=\max\{u(x),0\}$ and $u^{-}(x)=\min\{u(x),0\}$. Hence, under
the hypothesis \textrm{(}$\mathrm{H}_{1}$\textrm{)}, we can write
\[
I(u)=\frac{1}{2}\left\Vert u\right\Vert _{1,2}^{2}-\int_{\Omega_{\delta}}%
\frac{1}{p(x)}(u^{+})^{p(x)}\,\mathrm{d}x-\frac{1}{2^{\ast}}\int
_{\mathbb{R}^{N}\setminus\Omega_{\delta}}(u^{+})^{2^{\ast}}\,\mathrm{d}x.
\]

For a posterior use, let us estimate the second term in the above expression.
For this, let $u\in D^{1,2}(\mathbb{R}^{N})$ and consider the set
$E=\{x\in\Omega_{\delta}:\left\vert u(x)\right\vert <1\}.$ Then,
\begin{align*}
\int_{\Omega_{\delta}}\frac{1}{p(x)}(u^{+})^{p(x)}\,\mathrm{d}x  &  \leq
\frac{1}{p^{-}}\int_{E}(u^{+})^{p^{-}}\,\mathrm{d}x+\frac{1}{p^{-}}%
\int_{\Omega_{\delta}\setminus E}(u^{+})^{2^{\ast}}\,\mathrm{d}x\\
&  \leq\frac{1}{p^{-}}\int_{\Omega_{\delta}}\left\vert u\right\vert ^{p^{-}%
}\,\mathrm{d}x+\frac{1}{p^{-}}\left\Vert u\right\Vert _{2^{\ast}}^{2^{\ast}}\\
&  \leq\frac{1}{p^{-}}\left(  \int_{\Omega_{\delta}}\left\vert u\right\vert
^{2^{\ast}}\,\mathrm{d}x\right)  ^{\frac{p^{-}}{2^{\ast}}}\left\vert
\Omega_{\delta}\right\vert ^{\frac{2^{\ast}-p^{-}}{2^{\ast}}}+\frac{1}{p^{-}%
}\left\Vert u\right\Vert _{2^{\ast}}^{2^{\ast}}\\
&  \leq\frac{1}{p^{-}}\left\vert \Omega_{\delta}\right\vert ^{\frac{2^{\ast
}-p^{-}}{2^{\ast}}}\left\Vert u\right\Vert _{2^{\ast}}^{p^{-}}+\frac{1}{p^{-}%
}\left\Vert u\right\Vert _{2^{\ast}}^{2^{\ast}},
\end{align*}
where\ we have used ($\mathrm{H}_{1}$) and H\"{o}lder's inequality. Hence, it
follows from (\ref{prb0}) and ($\mathrm{H}_{1}c$) that%
\begin{equation}
\frac{1}{2^{\ast}}\int_{\Omega_{\delta}}(u^{+})^{p(x)}\,\mathrm{d}x\leq
\int_{\Omega_{\delta}}\frac{1}{p(x)}(u^{+})^{p(x)}\,\mathrm{d}x\leq
a\left\Vert u\right\Vert _{1,2}^{p^{-}}+b\left\Vert u\right\Vert
_{1,2}^{2^{\ast}}, \label{aux1}%
\end{equation}
where%
\begin{equation}
a=\frac{1}{p^{-}}\left\vert \Omega_{\delta}\right\vert ^{\frac{2^{\ast}-p^{-}%
}{2^{\ast}}}\quad\mathrm{and}\quad b=\frac{S^{-\frac{2^{\ast}}{2}}}{p^{-}}.
\label{aux2}%
\end{equation}

We observe from (\ref{aux1}) that the functional $I$ is well defined.

The next lemma establishes that $I$ is of class $C^{1}$. Since its proof is
standard, it will be omitted.

\begin{lemma}
\label{lemb1} Let $p\in C(\mathbb{R}^{N},\mathbb{R})$ a function satisfying
\textrm{(H}$_{1}a$\textrm{)}. Then $I\in C^{1}(D^{1,2}(\mathbb{R}%
^{N}),\mathbb{R})$ and%
\begin{equation}
I^{\prime}(u)(v)=\int_{\mathbb{R}^{N}}\nabla u\cdot\nabla v\,\mathrm{d}%
x-\int_{\mathbb{R}^{N}}(u^{+})^{p(x)-1}v\,\mathrm{d}x,\quad\forall\ u,v\in
D^{1,2}(\mathbb{R}^{N}). \label{prb4}%
\end{equation}

\end{lemma}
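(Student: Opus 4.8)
The plan is to prove Lemma \ref{lemb1} by the standard variational machinery: split the functional into the quadratic gradient part and the ``nonlinear'' part $\Psi(u):=\int_{\mathbb{R}^N}\frac{1}{p(x)}(u^+)^{p(x)}\,\mathrm{d}x$, show each is $C^1$, and compute the derivative. The gradient part $u\mapsto\frac12\|u\|_{1,2}^2$ is a continuous quadratic form on a Hilbert space, hence $C^1$ with derivative $v\mapsto\int_{\mathbb{R}^N}\nabla u\cdot\nabla v\,\mathrm{d}x$; nothing needs to be said there. The whole content is in $\Psi$.

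First I would observe that, by (H$_1$), the integrand of $\Psi$ is supported in $\Omega_\delta$ (a bounded set), since $p(x)\equiv 2^\ast$ off $\Omega_\delta$ — wait, this is not quite right, as the $2^\ast$ term contributes too. So more precisely I would treat $\Psi(u)=\int_{\Omega_\delta}\frac{1}{p(x)}(u^+)^{p(x)}\,\mathrm{d}x+\frac{1}{2^\ast}\int_{\mathbb{R}^N\setminus\Omega_\delta}(u^+)^{2^\ast}\,\mathrm{d}x=:\Psi_1(u)+\Psi_2(u)$. The term $\Psi_2$ is (up to the constant) the standard critical functional on $D^{1,2}(\mathbb{R}^N)$: using the continuity of the Sobolev embedding $D^{1,2}(\mathbb{R}^N)\hookrightarrow L^{2^\ast}(\mathbb{R}^N)$ together with the elementary inequality $\bigl||t|^{2^\ast-2}t-|s|^{2^\ast-2}s\bigr|\le C(|t|^{2^\ast-2}+|s|^{2^\ast-2})|t-s|$ and Hölder, one shows the Nemytskii operator $u\mapsto (u^+)^{2^\ast-1}$ is continuous from $L^{2^\ast}$ to $L^{(2^\ast)'}=L^{2^\ast/(2^\ast-1)}$, which gives Fréchet differentiability of $\Psi_2$ with $\Psi_2'(u)v=\int_{\mathbb{R}^N\setminus\Omega_\delta}(u^+)^{2^\ast-1}v\,\mathrm{d}x$ and continuity of $u\mapsto\Psi_2'(u)$ in $D^{-1}$. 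For $\Psi_1$, since $\Omega_\delta$ is bounded and $p^-\le p(x)\le 2^\ast$ there, the embeddings $D^{1,2}(\mathbb{R}^N)\hookrightarrow L^{2^\ast}(\Omega_\delta)\hookrightarrow L^{p(x)}(\Omega_\delta)$ (the second via Hölder on a set of finite measure, as already used to derive (\ref{aux1})) let one run the same Nemytskii-operator argument, now for the variable-exponent nonlinearity $t\mapsto (t^+)^{p(x)-1}$; the continuity and growth bounds furnished by (H$_1a$) and (H$_1c$) make the integrands dominated by affine combinations of $|u|^{p^--1}$ and $|u|^{2^\ast-1}$ on $\Omega_\delta$, which are integrable against a test function by Hölder.

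Concretely, the key steps in order are: (i) fix $u,v$ and verify that $\int\frac{1}{p(x)}(u+tv)^{p(x)}$ is differentiable in $t$ with the expected integrand, using the mean value theorem pointwise and a dominated-convergence argument with the majorant coming from (\ref{aux1})-type estimates; this yields the Gateaux derivative and formula (\ref{prb4}). (ii) Show $u\mapsto I'(u)$ is continuous from $D^{1,2}(\mathbb{R}^N)$ into $D^{-1}$: given $u_n\to u$, estimate $\|I'(u_n)-I'(u)\|_{D^{-1}}$ by $\sup_{\|v\|_{1,2}\le1}\bigl(\int_{\Omega_\delta}|(u_n^+)^{p(x)-1}-(u^+)^{p(x)-1}|\,|v| + \int_{\mathbb{R}^N\setminus\Omega_\delta}|(u_n^+)^{2^\ast-1}-(u^+)^{2^\ast-1}|\,|v|\bigr)$, bound each by Hölder and reduce to $L^{(2^\ast)'}$-convergence of the Nemytskii operators, which follows from a subsequence argument (pointwise a.e.\ convergence along a subsequence plus the generalized dominated convergence / Brezis–Lieb-type lemma with $L^{2^\ast}$-dominating functions). (iii) A Gateaux-differentiable functional with continuous derivative is Fréchet-$C^1$; conclude.

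The main obstacle — though still routine — is controlling the variable-exponent term $\Psi_1$: one must be careful that near points where $u$ crosses the value $1$ the dominating function switches between the $|u|^{p^-}$ and $|u|^{2^\ast}$ regimes, exactly as in the splitting $E=\{x\in\Omega_\delta:|u(x)|<1\}$ used to prove (\ref{aux1}). Handling this cleanly, and verifying the a.e.-convergence of $(u_n^+)^{p(x)-1}$ requires only continuity of $t\mapsto t^{p(x)-1}$ for each fixed $x$ together with $u_n\to u$ a.e.\ along a subsequence, so no new difficulty arises; the pointwise variable exponent is an innocuous parameter once the two-regime majorant is in place. Since all of this is entirely standard, as the authors remark, the proof is omitted.
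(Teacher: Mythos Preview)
Your proposal is correct and follows exactly the standard approach the paper has in mind; indeed, the paper explicitly says the proof is standard and omits it entirely. Your decomposition $\Psi=\Psi_1+\Psi_2$ over $\Omega_\delta$ and its complement, the Nemytskii-operator continuity arguments, and the two-regime majorant borrowed from (\ref{aux1}) are precisely the routine verification one would supply here.
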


The previous lemma ensures that $u\in D^{1,2}(\mathbb{R}^{N})$ is a solution
of (\ref{prb1}) if, and only if, $u$ is a critical point of $I$ (i.e.
$I^{\prime}(u)=0$). We would like to point out that critical points $u$ of $I$
are nonnegative, since
\[
0=I^{\prime}(u)(u^{-})=\int_{\mathbb{R}^{N}}\nabla u\cdot\nabla u^{-}%
\,\mathrm{d}x-\int_{\mathbb{R}^{N}}(u^{+})^{p(x)-1}u^{-}\,\mathrm{d}%
x=\left\Vert u^{-}\right\Vert _{1,2}^{2}.
\]
We would also emphasize the following consequence of the strongly maximum
principle: if $u$ is a nontrivial critical point of $I,$ then $u$ is a
positive solution of (\ref{prb1}).

\subsection{The Nehari manifold}

In this subsection we prove some properties of the Nehari manifold associated
with (\ref{prb1}), which is defined by%
\[
\mathcal{N}:=\{u\in D^{1,2}(\mathbb{R}^{N})\setminus\{0\}:J(u)=0\},
\]
where
\[
J(u):=I^{\prime}(u)(u)=\left\Vert u\right\Vert _{1,2}^{2}-\int_{\mathbb{R}%
^{N}}(u^{+})^{p(x)}\,\mathrm{d}x.
\]

Of course, critical points of $I$ belong to $\mathcal{N}.$

\begin{definition}
	\label{GS}We say that $u\in\mathcal{N}$ is a \textit{ground state solution}
	for (\ref{prb1}) if $I^{\prime}(u)=0$ and $I(u)=m.$
\end{definition}

Next, we will show important properties involving the Nehari manifold, which are crucial in our approach.

\begin{proposition}
\label{lemb2} Assume that ($\mathrm{H}_{1}$) holds. Then%
\[
m:=\inf_{u\in\mathcal{N}}I(u)>0.
\]

\end{proposition}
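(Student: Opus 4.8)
The plan is to establish two facts for every $u\in\mathcal{N}$: first, a uniform positive lower bound for $\left\Vert u\right\Vert _{1,2}$; second, a lower bound for $I(u)$ proportional to $\left\Vert u\right\Vert _{1,2}^{2}$. Combining the two immediately gives $m>0$.

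\textbf{Step 1: a uniform lower bound on the norm.} Let $u\in\mathcal{N}$. First observe that $u^{+}\not\equiv0$, for otherwise $J(u)=0$ would force $\left\Vert u\right\Vert _{1,2}=0$, contradicting $u\neq0$; hence $t:=\left\Vert u\right\Vert _{1,2}>0$. Using $(\mathrm{H}_{1})$ I would split
\[
\int_{\mathbb{R}^{N}}(u^{+})^{p(x)}\,\mathrm{d}x=\int_{\Omega_{\delta}}(u^{+})^{p(x)}\,\mathrm{d}x+\int_{\mathbb{R}^{N}\setminus\Omega_{\delta}}(u^{+})^{2^{\ast}}\,\mathrm{d}x,
\]
bound the first term by (\ref{aux1}), namely $\int_{\Omega_{\delta}}(u^{+})^{p(x)}\,\mathrm{d}x\leq 2^{\ast}\left(a\,t^{p^{-}}+b\,t^{2^{\ast}}\right)$, and the second by the Sobolev inequality coming from (\ref{prb0}), $\int_{\mathbb{R}^{N}\setminus\Omega_{\delta}}(u^{+})^{2^{\ast}}\,\mathrm{d}x\leq\left\Vert u\right\Vert _{2^{\ast}}^{2^{\ast}}\leq S^{-2^{\ast}/2}t^{2^{\ast}}$. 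Since $J(u)=0$, this yields
\[
t^{2}=\int_{\mathbb{R}^{N}}(u^{+})^{p(x)}\,\mathrm{d}x\leq 2^{\ast}a\,t^{p^{-}}+\left(2^{\ast}b+S^{-2^{\ast}/2}\right)t^{2^{\ast}}.
\]
Dividing by $t^{2}>0$ and using $p^{-}>2$ and $2^{\ast}>2$, one gets $1\leq C_{1}t^{p^{-}-2}+C_{2}t^{2^{\ast}-2}$ with $C_{1},C_{2}>0$; as the right-hand side is continuous, strictly increasing in $t$, and vanishes at $t=0$, there is a constant $\rho>0$, independent of $u$, such that $\left\Vert u\right\Vert _{1,2}\geq\rho$ for all $u\in\mathcal{N}$.

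\textbf{Step 2: a lower bound on the energy, and conclusion.} Note that $p(x)\geq p^{-}$ for every $x\in G=\mathbb{R}^{N}$: this holds on $\Omega_{\delta}$ by $(\mathrm{H}_{1}c)$, and on $\mathbb{R}^{N}\setminus\Omega_{\delta}$ because there $p(x)=2^{\ast}>p^{-}$ (from $(\mathrm{H}_{1}a)$). Hence $\tfrac{1}{p(x)}\leq\tfrac{1}{p^{-}}$, and for $u\in\mathcal{N}$, using $J(u)=0$ once more,
\[
\int_{\mathbb{R}^{N}}\frac{1}{p(x)}(u^{+})^{p(x)}\,\mathrm{d}x\leq\frac{1}{p^{-}}\int_{\mathbb{R}^{N}}(u^{+})^{p(x)}\,\mathrm{d}x=\frac{1}{p^{-}}\left\Vert u\right\Vert _{1,2}^{2}.
\]
Therefore $I(u)=\tfrac{1}{2}\left\Vert u\right\Vert _{1,2}^{2}-\int_{\mathbb{R}^{N}}\tfrac{1}{p(x)}(u^{+})^{p(x)}\,\mathrm{d}x\geq\left(\tfrac{1}{2}-\tfrac{1}{p^{-}}\right)\left\Vert u\right\Vert _{1,2}^{2}$, with $\tfrac{1}{2}-\tfrac{1}{p^{-}}>0$ since $p^{-}>2$. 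Combining with Step 1, $I(u)\geq\left(\tfrac{1}{2}-\tfrac{1}{p^{-}}\right)\rho^{2}$ for all $u\in\mathcal{N}$, so $m\geq\left(\tfrac{1}{2}-\tfrac{1}{p^{-}}\right)\rho^{2}>0$.

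There is no genuinely deep obstacle here; the only point requiring care is Step 1, where one must correctly exploit the splitting provided by $(\mathrm{H}_{1})$ together with the a priori estimate (\ref{aux1}) and the Sobolev inequality in order to reach the superlinear inequality $1\leq C_{1}t^{p^{-}-2}+C_{2}t^{2^{\ast}-2}$. Once this inequality and the elementary inequality $p(x)\geq p^{-}$ are in place, everything else is immediate.
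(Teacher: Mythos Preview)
Your proof is correct and follows essentially the same approach as the paper: a uniform lower bound on $\left\Vert u\right\Vert_{1,2}$ for $u\in\mathcal{N}$ obtained from the splitting in $(\mathrm{H}_1)$, estimate (\ref{aux1}) and Sobolev, followed by the inequality $I(u)\geq\bigl(\tfrac{1}{2}-\tfrac{1}{p^{-}}\bigr)\left\Vert u\right\Vert_{1,2}^{2}$ coming from $p(x)\geq p^{-}$ and $J(u)=0$. The paper phrases Step~2 as $I(u)=I(u)-\tfrac{1}{p^{-}}I'(u)(u)$ before expanding, but this is the same computation you carry out directly.
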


\begin{proof}
For an arbitrary $u\in\mathcal{N}$ we have
\[
\left\Vert u\right\Vert _{1,2}^{2}=\int_{\mathbb{R}^{N}}(u^{+})^{p(x)}%
\,\mathrm{d}x=\int_{\Omega_{\delta}}(u^{+})^{p(x)}\,\mathrm{d}x+\int
_{\mathbb{R}^{N}\setminus\Omega_{\delta}}(u^{+})^{p(x)}\,\mathrm{d}x\leq
\int_{\Omega_{\delta}}(u^{+})^{p(x)}\,\mathrm{d}x+\left\Vert u\right\Vert
_{1,2}^{2^{\ast}}.
\]
Thus, it follows from (\ref{aux1}) that
\[
\left\Vert u\right\Vert _{1,2}^{2}\leq C_{1}\left\Vert u\right\Vert
_{1,2}^{p^{-}}+C_{2}\left\Vert u\right\Vert _{1,2}^{2^{\ast}},
\]
where $C_{1}$ and $C_{2}$ denote positive constants that do not depend on $u$.
Consequently,
\[
1\leq C_{1}\left\Vert u\right\Vert _{1,2}^{p^{-}-2}+C_{2}\left\Vert
u\right\Vert _{1,2}^{2^{\ast}-2},
\]
from which conclude that there exists $\eta>0$ such that
\begin{equation}
\left\Vert u\right\Vert _{1,2}\geq\eta,\quad\forall u\in\mathcal{N}.
\label{aux4}%
\end{equation}
Therefore,
\begin{align}
I(u)  &  =I(u)-\frac{1}{p^{-}}I^{\prime}(u)(u)\label{aux3}\\
&  =\left(  \frac{1}{2}-\frac{1}{p^{-}}\right)  \left\Vert u\right\Vert
_{1,2}^{2}+\int_{\mathbb{R}^{N}}\left(  \frac{1}{p^{-}}-\frac{1}{p(x)}\right)
(u^{+})^{p(x)}\mathrm{d}x\geq\left(  \frac{1}{2}-\frac{1}{p^{-}}\right)
\left\Vert u\right\Vert _{1,2}^{2}.\nonumber
\end{align}
In view of (\ref{aux4}), this implies that $m\geq\left(  \frac{1}{2}-\frac
{1}{p^{-}}\right)  \eta^{2}>0$.
\end{proof}

\begin{proposition}
\label{lemb3} Assume ($\mathrm{H}_{1}$). Then, for each $u\in D^{1,2}%
(\mathbb{R}^{N})$ with $u^{+}\not \equiv 0$, there exists a unique $t_{u}>0$
such that $t_{u}u\in\mathcal{N}$.
\end{proposition}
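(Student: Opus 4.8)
The plan is to fix $u \in D^{1,2}(\mathbb{R}^N)$ with $u^+ \not\equiv 0$ and study the fibering map $\varphi(t) := I(tu)$ for $t > 0$, showing it has a unique critical point $t_u$. Since $tu \in \mathcal{N}$ if and only if $J(tu) = I'(tu)(tu) = t\varphi'(t) = 0$ (for $t > 0$), it suffices to show $\varphi'$ has a unique zero on $(0,\infty)$. Writing things out,
\[
\varphi'(t) = t\|u\|_{1,2}^2 - \int_{\mathbb{R}^N} t^{p(x)-1}(u^+)^{p(x)}\,\mathrm{d}x,
\]
so the condition $tu \in \mathcal{N}$ is equivalent to
\[
\|u\|_{1,2}^2 = \int_{\mathbb{R}^N} t^{p(x)-2}(u^+)^{p(x)}\,\mathrm{d}x =: g(t).
\]

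The key is to analyze $g:(0,\infty) \to (0,\infty)$. First I would observe that $g$ is continuous (this uses the integrability estimates leading to (\ref{aux1}), together with dominated convergence, splitting the integral over $\Omega_\delta$ and $\mathbb{R}^N \setminus \Omega_\delta$ where $p \equiv 2^*$). Next, since $p(x) - 2 \geq p^- - 2 > 0$ for all $x \in \Omega_\delta$ and $p(x) - 2 = 2^* - 2 > 0$ on the complement, each integrand $t^{p(x)-2}(u^+)^{p(x)}$ is a strictly increasing function of $t$ wherever $u^+ > 0$ on a positive-measure set; hence $g$ is strictly increasing on $(0,\infty)$. For the boundary behavior: as $t \to 0^+$, monotone convergence gives $g(t) \to 0$, because $t^{p(x)-2} \to 0$ pointwise and $p(x) - 2 > 0$ everywhere. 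As $t \to \infty$, since $u^+ \not\equiv 0$ there is a positive-measure set on which $u^+ > 0$, and on that set $t^{p(x)-2} \to \infty$; monotone convergence then forces $g(t) \to \infty$. By the intermediate value theorem and strict monotonicity, there is exactly one $t_u > 0$ with $g(t_u) = \|u\|_{1,2}^2$, which is the desired conclusion.

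The main obstacle is the behavior as $t \to \infty$ when $u^+$ is concentrated where $p(x)$ is close to $2$: one must be careful that the divergence $g(t) \to \infty$ is genuine. This is handled by noting that $p^- > 2$ is a \emph{uniform} lower bound on $\Omega_\delta$ and $2^* > 2$ on the complement, so on \emph{any} set where $u^+ > 0$ we have $t^{p(x)-2} \geq \min\{t^{p^--2}, t^{2^*-2}, t^{p^--2}\}$... more simply, $t^{p(x)-2} \geq \min\{1, t^{p^- - 2}\} \to \infty$ as $t \to \infty$ since $p^- - 2 > 0$; thus $g(t) \geq \min\{1,t^{p^--2}\} \int_{\{u^+>0\}} (u^+)^{p(x)}\,\mathrm{d}x$, and the latter integral is a fixed positive number. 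A symmetric uniform lower estimate $t^{p(x)-2} \leq \max\{1, t^{p^- - 2}, t^{2^*-2}\}$ is not needed, but controlling the integrand from above by an integrable function (again splitting on $|u| < 1$ and $|u| \geq 1$ as in the derivation of (\ref{aux1})) is what justifies continuity and the limit calculations via dominated/monotone convergence. Once $g$ is shown to be a continuous strictly increasing bijection of $(0,\infty)$ onto $(0,\infty)$, uniqueness and existence of $t_u$ are immediate.
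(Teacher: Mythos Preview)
Your approach is correct and essentially the same as the paper's: both study the fibering map $t\mapsto I(tu)$ and reduce $tu\in\mathcal N$ to the equation $\|u\|_{1,2}^2=\int_{\mathbb R^N}t^{p(x)-2}(u^+)^{p(x)}\,\mathrm dx$, using that $p(x)-2>0$ everywhere (from $(\mathrm H_1)$) to get existence via the intermediate value theorem and uniqueness via strict monotonicity of the right-hand side in $t$. The paper phrases existence slightly differently (bounding $f'(t)$ above and below by $t(\|u\|_{1,2}^2-t^{p^--2}\int(u^+)^{p(x)})$ on $(0,1)$ and $[1,\infty)$ respectively) and does uniqueness by a direct contradiction, but the content is the same.

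One small slip to fix: your displayed bound $t^{p(x)-2}\geq\min\{1,t^{p^--2}\}$ does \emph{not} tend to $\infty$ as $t\to\infty$ (for $t\geq 1$ the minimum equals $1$). The correct elementary bound is simply $t^{p(x)-2}\geq t^{p^--2}$ for $t\geq 1$, since $p(x)\geq p^-$ everywhere by $(\mathrm H_1b)$--$(\mathrm H_1c)$; this gives $g(t)\geq t^{p^--2}\int(u^+)^{p(x)}\,\mathrm dx\to\infty$. In any case, your monotone-convergence argument just before that line already suffices, so the slip is cosmetic.
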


\begin{proof}
Let
\[
f(t):=I(tu)=\frac{t^{2}}{2}\left\Vert u\right\Vert _{1,2}^{2}-\int
_{\mathbb{R}^{N}}\frac{t^{p(x)}}{p(x)}(u^{+})^{p(x)}\mathrm{d}x,\quad
t\in(0,+\infty).
\]
We note that
\[
f^{\prime}(t)=I^{\prime}(tu)(u)=t\left\Vert u\right\Vert _{1,2}^{2}%
-\int_{\mathbb{R}^{N}}t^{p(x)-1}(u^{+})^{p(x)}\mathrm{d}x=\frac{1}%
{t}J(tu),\quad\forall\,t\in(0,+\infty).
\]

Since $1<p^{-}-1\leq p(x)-1$ we have%
\[
f^{\prime}(t)\geq t\left(  \left\Vert u\right\Vert _{1,2}^{2}-t^{p^{-}-2}%
\int_{\mathbb{R}^{N}}(u^{+})^{p(x)}\mathrm{d}x\right)  ,\quad\forall
\,t\in(0,1)
\]
and
\[
f^{\prime}(t)\leq t\left(  \left\Vert u\right\Vert _{1,2}^{2}-t^{p^{-}-2}%
\int_{\mathbb{R}^{N}}(u^{+})^{p(x)}\mathrm{d}x\right)  ,\quad\forall\,t\geq1.
\]
Thus, we can see that $f^{\prime}(t)>0$ for all $t>0$ sufficiently small and
also that $f^{\prime}(t)<0$ for all $t\geq1$ sufficiently large. Therefore,
there exists $t_{u}>0$ such that
\[
f^{\prime}(t_{u})=\frac{1}{t_{u}}J(t_{u}u)=0,
\]
showing that $t_{u}u\in\mathcal{N}$.

In order to prove the uniqueness of $t_{u},$ let us assume that $0<t_{1}%
<t_{2}$ satisfy $f^{\prime}(t_{1})=f^{\prime}(t_{2})=0$. Then,
\[
\left\Vert u\right\Vert _{1,2}^{2}=\int_{\mathbb{R}^{N}}t_{1}^{p(x)-2}%
(u^{+})^{p(x)}\mathrm{d}x=\int_{\mathbb{R}^{N}}t_{2}^{p(x)-2}(u^{+}%
)^{p(x)}\mathrm{d}x.
\]
Hence,
\[
\int_{\mathbb{R}^{N}}(t_{1}^{p(x)-2}-t_{2}^{p(x)-2})(u^{+})^{p(x)}%
\mathrm{d}x=0.
\]
Since $t_{1}^{p(x)-2}>t_{2}^{p(x)-2}$ for all $x\in\mathbb{R}^{N}$, the above
equality leads to the contradiction: $u^{+}\equiv0.$
\end{proof}

\begin{proposition}
\label{lemb4} Assume that ($\mathrm{H}_{1}$) holds. Then,
\[
J^{\prime}(u)(u)\leq(2-p^{-})\eta^{2}<0,\quad\forall\,u\in\mathcal{N},
\]
where $\eta$ was given in (\ref{aux4}). Hence, $J^{\prime}(u)\neq0$ for all $u\in\mathcal{N}$.
\end{proposition}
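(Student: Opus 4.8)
The plan is to compute $J'(u)(u)$ explicitly and then combine the Nehari constraint with the pointwise lower bound $p(x)\ge p^{-}$, which (H$_1$) enforces on all of $\mathbb{R}^{N}$. First I would record the derivative of $J$. Since
\[
J(u)=\left\Vert u\right\Vert _{1,2}^{2}-\int_{\mathbb{R}^{N}}(u^{+})^{p(x)}\,\mathrm{d}x,
\]
a standard differentiation-under-the-integral argument (entirely parallel to the one behind Lemma \ref{lemb1}, using the growth control coming from (\ref{aux1})) shows that $J\in C^{1}(D^{1,2}(\mathbb{R}^{N}),\mathbb{R})$ with
\[
J^{\prime}(u)(v)=2\int_{\mathbb{R}^{N}}\nabla u\cdot\nabla v\,\mathrm{d}x-\int_{\mathbb{R}^{N}}p(x)(u^{+})^{p(x)-1}v\,\mathrm{d}x ,
\]
so that in particular
\[
J^{\prime}(u)(u)=2\left\Vert u\right\Vert _{1,2}^{2}-\int_{\mathbb{R}^{N}}p(x)(u^{+})^{p(x)}\,\mathrm{d}x .
\]

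Next, for $u\in\mathcal{N}$ I would use the Nehari identity $\left\Vert u\right\Vert _{1,2}^{2}=\int_{\mathbb{R}^{N}}(u^{+})^{p(x)}\,\mathrm{d}x$. The key observation is that $p(x)\ge p^{-}$ for every $x\in\mathbb{R}^{N}$: on $\Omega_{\delta}$ this is (H$_1c$), while on $\mathbb{R}^{N}\setminus\Omega_{\delta}$ one has $p(x)=2^{\ast}>p^{-}$ by (H$_1b$) together with $p^{-}<2^{\ast}$ from (H$_1a$). Consequently
\[
\int_{\mathbb{R}^{N}}p(x)(u^{+})^{p(x)}\,\mathrm{d}x\geq p^{-}\int_{\mathbb{R}^{N}}(u^{+})^{p(x)}\,\mathrm{d}x=p^{-}\left\Vert u\right\Vert _{1,2}^{2},
\]
and substituting this into the expression for $J^{\prime}(u)(u)$ gives $J^{\prime}(u)(u)\leq(2-p^{-})\left\Vert u\right\Vert _{1,2}^{2}$.

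Finally, since $p^{-}>2$ the factor $2-p^{-}$ is negative, so combining the previous inequality with the uniform lower bound $\left\Vert u\right\Vert _{1,2}\geq\eta$ from (\ref{aux4}) yields $J^{\prime}(u)(u)\leq(2-p^{-})\eta^{2}<0$ for all $u\in\mathcal{N}$, which is the claimed estimate; in particular $J^{\prime}(u)(u)\neq0$, hence $J^{\prime}(u)\neq0$ on $\mathcal{N}$. There is essentially no serious obstacle here: the only two points needing a touch of care are the $C^{1}$ regularity of $J$ and the validity of its derivative formula (handled exactly as in Lemma \ref{lemb1}), and the verification of the pointwise bound $p(x)\ge p^{-}$ over the whole of $\mathbb{R}^{N}$, which just requires splitting into $\Omega_{\delta}$ and its complement.
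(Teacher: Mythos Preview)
Your argument is correct and follows the same approach as the paper: compute $J'(u)(u)=2\Vert u\Vert_{1,2}^{2}-\int_{\mathbb{R}^{N}}p(x)(u^{+})^{p(x)}\,\mathrm{d}x$, use $p(x)\ge p^{-}$ together with the Nehari identity to obtain $J'(u)(u)\le(2-p^{-})\Vert u\Vert_{1,2}^{2}$, and conclude via (\ref{aux4}). The only difference is that you spell out the justification for the global bound $p(x)\ge p^{-}$ and the $C^{1}$ regularity of $J$, which the paper leaves implicit.
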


\begin{proof}
For $u\in\mathcal{N}$ we have
\begin{align*}
J^{\prime}(u)(u)  &  =2\left\Vert u\right\Vert _{1,2}^{2}-\int_{\mathbb{R}%
^{N}}p(x)(u^{+})^{p(x)}\,\mathrm{d}x\\
&  \leq2\left\Vert u\right\Vert _{1,2}^{2}-p^{-}\int_{\mathbb{R}^{N}}%
(u^{+})^{p(x)}\,\mathrm{d}x=(2-p^{-})\left\Vert u\right\Vert _{1,2}^{2}%
\leq(2-p^{-})\eta^{2}<0,
\end{align*}
according to (\ref{aux4}).  
\end{proof}

\begin{proposition}
\label{lemb5} Assume ($\mathrm{H}_{1}$) and that there exists $u_{0}%
\in\mathcal{N}$ such that $I(u_{0})=m$. Then $u_{0}$ is ground state solution
for (\ref{prb1}).
\end{proposition}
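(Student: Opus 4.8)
The plan is to apply the Lagrange multiplier rule on the Nehari manifold. Recall that $\mathcal{N}=\{u\in D^{1,2}(\mathbb{R}^{N})\setminus\{0\}:J(u)=0\}$ with $J(u)=\|u\|_{1,2}^{2}-\int_{\mathbb{R}^{N}}(u^{+})^{p(x)}\,\mathrm{d}x$. As in Lemma \ref{lemb1}, both $I$ and $J$ are of class $C^{1}$ on $D^{1,2}(\mathbb{R}^{N})$, and by Proposition \ref{lemb4} we have $J^{\prime}(u_{0})(u_{0})\le(2-p^{-})\eta^{2}<0$, so in particular $J^{\prime}(u_{0})\neq0$ in $D^{-1}$. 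Consequently $\mathcal{N}$ is, in a neighbourhood of $u_{0}$, a $C^{1}$ Banach submanifold of codimension one, and since $u_{0}$ minimizes $I$ over $\mathcal{N}$, there exists $\lambda\in\mathbb{R}$ with
\[
I^{\prime}(u_{0})=\lambda J^{\prime}(u_{0})\quad\text{in }D^{-1}.
\]

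The second step is to show $\lambda=0$ by testing this identity against $u_{0}$ itself:
\[
I^{\prime}(u_{0})(u_{0})=\lambda J^{\prime}(u_{0})(u_{0}).
\]
The left-hand side vanishes because $u_{0}\in\mathcal{N}$ means precisely $I^{\prime}(u_{0})(u_{0})=J(u_{0})=0$, while Proposition \ref{lemb4} gives $J^{\prime}(u_{0})(u_{0})<0$. Hence $\lambda=0$, so $I^{\prime}(u_{0})=0$, i.e.\ $u_{0}$ is a critical point of $I$. Since $u_{0}\in\mathcal{N}$ is nontrivial and $I(u_{0})=m$, Definition \ref{GS} yields that $u_{0}$ is a ground state solution of (\ref{prb1}); moreover, by the remark following Lemma \ref{lemb1} (strong maximum principle), $u_{0}$ is in fact positive.

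An equivalent route, avoiding any appeal to an abstract submanifold statement, is via the fibering map. Given $v\in D^{1,2}(\mathbb{R}^{N})$, for $|s|$ small one has $(u_{0}+sv)^{+}\not\equiv0$ (note $\|u_{0}\|_{1,2}\ge\eta>0$ forces $u_{0}^{+}\not\equiv0$), so Proposition \ref{lemb3} produces a unique $t(s)>0$ with $t(s)(u_{0}+sv)\in\mathcal{N}$; the implicit function theorem applied to $s\mapsto J\big(t(u_{0}+sv)\big)$, using that $\partial_{t}J(tu)\big|_{t=1}=J^{\prime}(u_{0})(u_{0})\neq0$ at $s=0$, shows $s\mapsto t(s)$ is $C^{1}$ near $0$ with $t(0)=1$. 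Then $\phi(s):=I\big(t(s)(u_{0}+sv)\big)\ge m=\phi(0)$, so $\phi^{\prime}(0)=0$; expanding $\phi^{\prime}(0)=t^{\prime}(0)I^{\prime}(u_{0})(u_{0})+I^{\prime}(u_{0})(v)$ and again using $I^{\prime}(u_{0})(u_{0})=0$ gives $I^{\prime}(u_{0})(v)=0$ for every $v$, hence $I^{\prime}(u_{0})=0$. The only point requiring care is the nonvanishing of $J^{\prime}(u_{0})$ (equivalently, that $u_{0}$ is a genuine constrained minimizer and not a degenerate one), but this is exactly Proposition \ref{lemb4}, so there is no real obstacle; I would present the Lagrange-multiplier version as the main argument and mention the fibering alternative in passing.
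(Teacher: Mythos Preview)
Your main argument is correct and is essentially identical to the paper's own proof: apply the Lagrange multiplier rule (justified since $J'(u_{0})\neq 0$ by Proposition~\ref{lemb4}), test the identity $I'(u_{0})=\lambda J'(u_{0})$ against $u_{0}$, and use $I'(u_{0})(u_{0})=J(u_{0})=0$ together with $J'(u_{0})(u_{0})<0$ to force $\lambda=0$. The fibering alternative you sketch is a nice addition but not needed here.
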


\begin{proof}
Since $m$ is the minimum of $I$ on $\mathcal{N},$ Lagrange multiplier theorem
implies that there exists $\lambda\in\mathbb{R}$ such that $I^{\prime}%
(u_{0})=\lambda J^{\prime}(u_{0})$. Thus
\[
\lambda J^{\prime}(u_{0})(u_{0})=I^{\prime}(u_{0})(u_{0})=J(u_{0})=0.
\]
According to the previous lemma, $\lambda=0$, and so, $I^{\prime}(u_{0})=0.$
\end{proof}

The next lemma shows that, under ($\mathrm{H}_{1}$), there exists a
Palais-Smale sequence for $I$ associated with the minimum $m.$

\begin{proposition}
\label{lemb6} Assume ($\mathrm{H}_{1}$). There exists a sequence
$(u_{n})\subset\mathcal{N}$ such that: $u_{n}\geq0$ in $\mathbb{R}^N,$
$I(u_{n})\rightarrow m,$ and $I^{\prime}(u_{n})\rightarrow0$ in $D^{-1}.$
\end{proposition}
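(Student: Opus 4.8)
The plan is to build a minimizing sequence on $\mathcal{N}$ via Ekeland's variational principle, upgrade it to a Palais--Smale sequence for the unconstrained functional $I$ by a Lagrange-multiplier argument, and finally project its members onto nonnegative functions still lying on $\mathcal{N}$.

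First, I would observe that $\mathcal{N}$ is a complete metric space: by (\ref{aux4}) every element of $\mathcal{N}$ has norm at least $\eta>0$, and since $J$ is continuous, $\mathcal{N}=J^{-1}(0)\setminus\{0\}$ is closed in $D^{1,2}(\mathbb{R}^{N})$. Moreover, the identity computed in (\ref{aux3}) shows $I(u)\geq\big(\tfrac12-\tfrac1{p^{-}}\big)\|u\|_{1,2}^{2}$ for all $u\in\mathcal{N}$, so any sequence in $\mathcal{N}$ along which $I\to m$ is bounded in $D^{1,2}(\mathbb{R}^{N})$. Applying Ekeland's variational principle to $I|_{\mathcal{N}}$ yields a bounded sequence $(v_{n})\subset\mathcal{N}$ with $I(v_{n})\to m$ and $I(w)\geq I(v_{n})-\tfrac1n\|w-v_{n}\|_{1,2}$ for every $w\in\mathcal{N}$.

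Next comes the passage from constrained to unconstrained. By Proposition \ref{lemb4}, $J'(v_{n})(v_{n})\leq(2-p^{-})\eta^{2}<0$, so $J'(v_{n})\neq0$ and $\mathcal{N}$ is a $C^{1}$ submanifold near each $v_{n}$; the Ekeland estimate then forces the derivative of $I$ along $\mathcal{N}$ at $v_{n}$ to tend to zero (constructing, in the standard way, curves $s\mapsto t(s)(v_{n}+s\varphi)$ lying on $\mathcal{N}$). The Lagrange-multiplier rule gives $\lambda_{n}\in\mathbb{R}$ with $I'(v_{n})-\lambda_{n}J'(v_{n})\to0$ in $D^{-1}$. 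Testing against $v_{n}$ and using $I'(v_{n})(v_{n})=J(v_{n})=0$ (as $v_{n}\in\mathcal{N}$), the boundedness of $\|J'(v_{n})\|_{D^{-1}}$ (a consequence of the boundedness of $(v_{n})$), and $J'(v_{n})(v_{n})\leq(2-p^{-})\eta^{2}<0$, one concludes $\lambda_{n}\to0$, hence $I'(v_{n})\to0$ in $D^{-1}$. Thus $(v_{n})$ is a Palais--Smale sequence for $I$ at the level $m$.

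Finally, I would restore nonnegativity without leaving $\mathcal{N}$. Since $\nabla v_{n}\cdot\nabla v_{n}^{-}=|\nabla v_{n}^{-}|^{2}$ and $(v_{n}^{+})^{p(x)-1}v_{n}^{-}\equiv0$ a.e., one has $I'(v_{n})(v_{n}^{-})=\|v_{n}^{-}\|_{1,2}^{2}$, whence $\|v_{n}^{-}\|_{1,2}\to0$. Set $w_{n}:=v_{n}^{+}$, which is not identically zero because $\|v_{n}\|_{1,2}^{2}=\int_{\mathbb{R}^{N}}(v_{n}^{+})^{p(x)}\geq\eta^{2}$, and let $t_{n}>0$ be the unique number with $u_{n}:=t_{n}w_{n}\in\mathcal{N}$, provided by Proposition \ref{lemb3}. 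From $J(w_{n})=\|w_{n}\|_{1,2}^{2}-\int_{\mathbb{R}^{N}}(w_{n}^{+})^{p(x)}=\|v_{n}^{+}\|_{1,2}^{2}-\|v_{n}\|_{1,2}^{2}=-\|v_{n}^{-}\|_{1,2}^{2}\leq0$ and the strict monotonicity of $t\mapsto J(tw_{n})/t^{2}$ one gets $t_{n}\leq1$; on the other hand, $u_{n}\in\mathcal{N}$ gives $\|w_{n}\|_{1,2}^{2}=\int_{\mathbb{R}^{N}}t_{n}^{p(x)-2}(w_{n}^{+})^{p(x)}\leq t_{n}^{p^{-}-2}\|v_{n}\|_{1,2}^{2}$, so that $t_{n}^{p^{-}-2}\geq\|w_{n}\|_{1,2}^{2}/\|v_{n}\|_{1,2}^{2}\to1$, and therefore $t_{n}\to1$. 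Consequently $\|u_{n}-v_{n}\|_{1,2}\leq|t_{n}-1|\,\|w_{n}\|_{1,2}+\|v_{n}^{-}\|_{1,2}\to0$, and by continuity of $I$ and $I'$ the sequence $(u_{n})\subset\mathcal{N}$ satisfies $u_{n}\geq0$, $I(u_{n})\to m$ and $I'(u_{n})\to0$ in $D^{-1}$. I expect the main obstacle to be the constrained-to-unconstrained passage, namely building admissible variations inside $\mathcal{N}$ and verifying $\lambda_{n}\to0$; once $\|v_{n}^{-}\|_{1,2}\to0$ is in hand, the reprojection step is routine.
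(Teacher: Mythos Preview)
Your argument follows the same core strategy as the paper: Ekeland's principle on $\mathcal{N}$, then the Lagrange-multiplier identity $I'(v_n)-\lambda_nJ'(v_n)\to0$, combined with $I'(v_n)(v_n)=0$ and Proposition~\ref{lemb4} to force $\lambda_n\to0$, and finally the observation $\|v_n^{-}\|_{1,2}^{2}=I'(v_n)(v_n^{-})\to0$.

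The one genuine difference is in the last step. The paper simply replaces $v_n$ by $v_n^{+}$ and checks directly that $I(v_n^{+})=I(v_n)-\tfrac12\|v_n^{-}\|_{1,2}^{2}\to m$ and $\|I'(v_n^{+})\|_{D^{-1}}\le\|I'(v_n)\|_{D^{-1}}+\|v_n^{-}\|_{1,2}\to0$, without verifying that $v_n^{+}\in\mathcal{N}$. You instead reproject via $u_n=t_n v_n^{+}\in\mathcal{N}$ and prove $t_n\to1$, so that your final sequence literally lies in $\mathcal{N}$ as the statement demands. Your route costs a short extra computation but matches the stated conclusion more faithfully; the paper's shortcut is quicker but, strictly speaking, only produces a nonnegative Palais--Smale sequence at level $m$, which is all that is actually used downstream.
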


\begin{proof}
According to the Ekeland variational principle (see \cite[Theorem 8.5]{W96}),
there exist $(u_{n})\subset\mathcal{N}$ and $(\lambda_{n})\subset\mathbb{R}$
such that
\[
I(u_{n})\rightarrow m\quad\mathrm{and}\quad I^{\prime}(u_{n})-\lambda
_{n}J^{\prime}(u_{n})\rightarrow0\quad\mathrm{in}\quad D^{-1}.
\]
It follows from (\ref{aux3}) that
\[
\left(  \frac{1}{2}-\frac{1}{p^{-}}\right)  \Vert u_{n}\Vert_{1,2}^{2}\leq
I(u_{n}).
\]
This implies that $(u_{n})$ is bounded in $D^{1,2}(\mathbb{R}^{N}).$ Hence,
taking into account that
\[
\left\vert I^{\prime}(u_{n})(u_{n})-\lambda_{n}J^{\prime}(u_{n})(u_{n}%
)\right\vert \leq\left\Vert I^{\prime}(u_{n})-\lambda_{n}J^{\prime}%
(u_{n})\right\Vert _{D^{-1}}\left\Vert u_{n}\right\Vert _{1,2}%
\]
we have
\[
I^{\prime}(u_{n})(u_{n})-\lambda_{n}J^{\prime}(u_{n})(u_{n})\rightarrow0.
\]
Using the fact that $I^{\prime}(u_{n})(u_{n})=0$ we then conclude, from
Proposition \ref{lemb4}, that $\lambda_{n}\rightarrow0.$ Consequently,
$I^{\prime}(u_{n})\rightarrow0$ in $D^{-1}.$

In order to complete this proof, let us show that the sequence $(u_{n}^{+})$
satisfies $I(u_{n}^{+})\rightarrow m$ and $I^{\prime}(u_{n}^{+})\rightarrow0$
in $D^{-1}.$ Indeed, since $\left\Vert u_{n}^{-}\right\Vert _{1,2}=I^{\prime
}(u_{n})(u_{n}^{-})\rightarrow0,$ we derive%
\[
I(u_{n}^{+})=I(u_{n})-\frac{1}{2}\left\Vert u_{n}^{-}\right\Vert _{1,2}%
^{2}\rightarrow m.
\]
Moreover, 
\[
\sup_{\|\phi\| \leq 1}\left\vert I^{\prime}(u_{n}^{+})(\phi)\right\vert =\sup_{\|\phi\| \leq 1}\left\vert I^{\prime}%
(u_{n})(\phi)-\int_{\mathbb{R}^{N}}\nabla(u_{n}^{-})\cdot\nabla\phi
\,\mathrm{d}x\right\vert \leq\left\Vert I^{\prime}(u_{n})\right\Vert _{D^{-1}%
}+\left\Vert u_{n}^{-}\right\Vert _{1,2}\rightarrow0.
\]
\end{proof}

The next lemma establishes an estimate from above for $m$.

\begin{proposition}
\label{lemb7} Assume ($\mathrm{H}_{1}$). Then $m<\frac{1}{N}S^{\frac{N}{2}}$, where $S$ denotes the Sobolev constant defined by (\ref{prb0})
\end{proposition}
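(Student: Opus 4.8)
The plan is to feed a single well-chosen test function into the Nehari/fibering machinery already built. By Proposition \ref{lemb3}, for any $v\in D^{1,2}(\mathbb{R}^{N})$ with $v^{+}\not\equiv 0$ there is a unique $t_{v}>0$ with $t_{v}v\in\mathcal{N}$; moreover, from the proof of that proposition, $t\mapsto I(tv)$ is $C^{1}$, increasing for small $t$, decreasing for large $t$, and has $t_{v}$ as its unique critical point, hence as its global maximum. Consequently $m\leq I(t_{v}v)=\max_{t>0}I(tv)$, so it suffices to exhibit $v$ with $\max_{t>0}I(tv)<\frac{1}{N}S^{N/2}$. I would take $v=v_{z}:=w(\cdot-z)$, a translate of the Aubin--Talenti ground state $w$ of (\ref{int4}), with $|z|$ so large that $0<v_{z}(x)<1$ for all $x\in\Omega_{\delta}$ --- possible because $\Omega_{\delta}$ is bounded and $w(x)\to0$ at infinity. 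Translation invariance gives $\|v_{z}\|_{1,2}^{2}=\|v_{z}\|_{2^{\ast}}^{2^{\ast}}=S^{N/2}$, and $v_{z}>0$.

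The first step is an upper bound for $I(tv_{z})$ on $(0,1]$. Splitting the nonlinear integral via $(\mathrm{H}_{1}b)$,
\[
I(tv_{z})=\frac{t^{2}}{2}S^{N/2}-\frac{1}{2^{\ast}}\int_{\mathbb{R}^{N}\setminus\Omega_{\delta}}(tv_{z})^{2^{\ast}}\,\mathrm{d}x-\int_{\Omega_{\delta}}\frac{1}{p(x)}(tv_{z})^{p(x)}\,\mathrm{d}x .
\]
For $t\in(0,1]$ and $x\in\Omega_{\delta}$ we have $s:=tv_{z}(x)\in(0,1)$ and $p(x)<2^{\ast}$ by $(\mathrm{H}_{1}c)$, hence $\frac{s^{p(x)}}{p(x)}>\frac{s^{2^{\ast}}}{2^{\ast}}$ (base in $(0,1)$ with $p(x)<2^{\ast}$, and $\tfrac{1}{p(x)}>\tfrac{1}{2^{\ast}}$). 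Since $|\Omega_{\delta}|>0$ and $v_{z}>0$, integrating this strict inequality yields $\int_{\Omega_{\delta}}\frac{1}{p(x)}(tv_{z})^{p(x)}\,\mathrm{d}x>\frac{1}{2^{\ast}}\int_{\Omega_{\delta}}(tv_{z})^{2^{\ast}}\,\mathrm{d}x$, so
\[
I(tv_{z})<\frac{t^{2}}{2}S^{N/2}-\frac{t^{2^{\ast}}}{2^{\ast}}\|v_{z}\|_{2^{\ast}}^{2^{\ast}}=\frac{t^{2}}{2}S^{N/2}-\frac{t^{2^{\ast}}}{2^{\ast}}S^{N/2}\leq\Bigl(\frac12-\frac1{2^{\ast}}\Bigr)S^{N/2}=\frac1N S^{N/2},
\]
the maximum of the middle expression over $t>0$ being attained at $t=1$. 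Thus $I(tv_{z})<\frac1N S^{N/2}$ for every $t\in(0,1]$.

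It remains to locate the maximizer $t_{v_{z}}$. Since $\frac{d}{dt}I(tv_{z})=\frac{1}{t}J(tv_{z})$, evaluating at $t=1$ and using $(\mathrm{H}_{1}b)$ together with $\int_{\mathbb{R}^{N}\setminus\Omega_{\delta}}v_{z}^{2^{\ast}}=S^{N/2}-\int_{\Omega_{\delta}}v_{z}^{2^{\ast}}$,
\[
\left.\frac{d}{dt}I(tv_{z})\right|_{t=1}=J(v_{z})=\|v_{z}\|_{1,2}^{2}-\int_{\mathbb{R}^{N}}v_{z}^{p(x)}\,\mathrm{d}x=\int_{\Omega_{\delta}}\bigl(v_{z}^{2^{\ast}}-v_{z}^{p(x)}\bigr)\,\mathrm{d}x<0,
\]
because $0<v_{z}<1$ and $p(x)<2^{\ast}$ on $\Omega_{\delta}$. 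As $t\mapsto I(tv_{z})$ is decreasing only on $(t_{v_{z}},+\infty)$, negativity of the derivative at $t=1$ forces $t_{v_{z}}<1$; combining with the previous step, $m\leq I(t_{v_{z}}v_{z})=\max_{t>0}I(tv_{z})<\frac1N S^{N/2}$. The delicate point, and the reason for translating the bubble rather than concentrating it, is that we need the full fiber maximum $\max_{t>0}I(tv)$ below $\frac1N S^{N/2}$, not just $I(v)$: pushing $v_{z}$ far from $\Omega_{\delta}$ makes it uniformly $<1$ there, which is exactly what delivers both the strict pointwise comparison on $(0,1]$ and the inequality $J(v_{z})<0$ that pins the maximizer below $1$. (Concentrating a bubble at a point of $\Omega$ instead makes the subcritical mass on $\Omega_{\delta}$ vanish and sends the fiber maximum to $+\infty$, so that route cannot work.)
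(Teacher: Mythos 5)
Your proof is correct and takes essentially the same route as the paper: project a translate of the Aubin--Talenti bubble, pushed far enough from $\Omega_{\delta}$ that it lies below $1$ there, onto the Nehari manifold, and combine $\max_{t>0}\bigl(\tfrac{t^{2}}{2}-\tfrac{t^{2^{\ast}}}{2^{\ast}}\bigr)=\tfrac{1}{N}$ with the pointwise comparison of $s^{p(x)}/p(x)$ and $s^{2^{\ast}}/2^{\ast}$ for $0<s<1$. The only (minor) difference is how the projection parameter is controlled: the paper uses the sequence $w_{k}$ and boundedness of $(t_{k})$ to get $t_{k}w_{k}\leq1$ on $\Omega_{\delta}$ for large $k$, whereas you fix one translate and pin $t_{v_{z}}<1$ via $J(v_{z})<0$ together with the uniqueness of the fiber critical point from Proposition \ref{lemb3}.
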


\begin{proof}
We define
\[
w_{k}(x):=w(x+ke_{N}),\quad e_{N}=(0,0,...,0,1),
\]
where $w:\mathbb{R}^{N}\rightarrow\mathbb{R}$ is the ground state solution of
(\ref{int4}) given by
\[
w(x)=\frac{[N(N-2)]^{\frac{N-2}{4}}}{(1+\left\vert x\right\vert ^{2}%
)^{\frac{N-2}{2}}},
\]
which satisfies
\begin{equation}
\left\Vert w\right\Vert _{1,2}^{2}=\left\Vert w\right\Vert _{2^{\ast}%
}^{2^{\ast}}=S^{\frac{N}{2}}. \label{prb8}%
\end{equation}

A direct computation shows that $\left\Vert w_{k}\right\Vert _{2^{\ast}%
}=\left\Vert w\right\Vert _{2^{\ast}}$ and $\left\Vert w_{k}\right\Vert
_{1,2}=\left\Vert w\right\Vert _{1,2}.$ Moreover, exploring the expression of
$w$ we can easily check that $w_{k}\rightarrow0$ uniformly in bounded sets
and, therefore,
\begin{equation}
\lim_{k\rightarrow\infty}%
%TCIMACRO{\dint _{\Omega_{\delta}}}%
%BeginExpansion
{\displaystyle\int_{\Omega_{\delta}}}
%EndExpansion
\left\vert w_{k}\right\vert ^{\alpha}\mathrm{d}x=0, \label{aux5}%
\end{equation}
for any $\alpha>0.$

By Proposition \ref{lemb3}, there exists $t_{k}>0$ such that $t_{k}w_{k}%
\in\mathcal{N},$ which means that
\begin{equation}
t_{k}^{2}\left\Vert w_{k}\right\Vert _{1,2}^{2}=\int_{\Omega_{\delta}}%
(t_{k}w_{k})^{p(x)}\mathrm{d}x+\int_{\mathbb{R}^{N}\setminus\Omega_{\delta}%
}(t_{k}w_{k})^{2^{\ast}}\mathrm{d}x. \label{prb9}%
\end{equation}
Hence,
\[
\left\Vert w\right\Vert _{1,2}^{2}=\left\Vert w_{k}\right\Vert _{1,2}^{2}\geq
t_{k}^{2^{\ast}-2}\int_{\mathbb{R}^{N}\setminus\Omega_{\delta}}\left\vert
w_{k}\right\vert ^{2^{\ast}}\mathrm{d}x
\]
and then, by using (\ref{aux5}) for $\alpha=2^{\ast},$ we can verify that the
sequence $\left(  t_{k}\right)  $ is bounded:
\[
\limsup_{k\rightarrow\infty}t_{k}\leq\limsup_{k\rightarrow\infty}\left(
\frac{\left\Vert w\right\Vert _{1,2}^{2}}{\int_{\mathbb{R}^{N}\setminus
\Omega_{\delta}}(w_{k})^{2^{\ast}}\mathrm{d}x}\right)  ^{\frac{1}{2^{\ast}-2}%
}=\left(  \frac{\left\Vert w\right\Vert _{1,2}^{2}}{\left\Vert w\right\Vert
_{2^{\ast}}^{2^{\ast}}}\right)  ^{\frac{1}{2^{\ast}-2}}=1.
\]
Moreover, since $t_{k}w_{k}\in\mathcal{N}$, 
\begin{align*}
m  &  \leq I(t_{k}w_{k})\\
&  =\frac{t_{k}^{2}}{2}\left\Vert w_{k}\right\Vert _{1,2}^{2}-\int
_{\mathbb{R}^{N}\setminus\Omega_{\delta}}\frac{(t_{k}w_{k})^{2^{\ast}}%
}{2^{\ast}}\,\mathrm{d}x-\int_{\Omega_{\delta}}\frac{(t_{k}w_{k})^{p(x)}%
}{p(x)}\,\mathrm{d}x\\
&  =\frac{t_{k}^{2}}{2}S^{N/2}-\int_{\mathbb{R}^{N}}\frac{(t_{k}%
w_{k})^{2^{\ast}}}{2^{\ast}}\,\mathrm{d}x+\int_{\Omega_{\delta}}\frac
{(t_{k}w_{k})^{2^{\ast}}}{2^{\ast}}\,\mathrm{d}x-\int_{\Omega_{\delta}}%
\frac{(t_{k}w_{k})^{p(x)}}{p(x)}\,\mathrm{d}x\\
&  =S^{N/2}\left(  \frac{t_{k}^{2}}{2}-\frac{t_{k}^{2^{\ast}}}{2^{\ast}%
}\right)  +\int_{\Omega_{\delta}}\left(  \frac{(t_{k}w_{k})^{2^{\ast}}%
}{2^{\ast}}-\frac{(t_{k}w_{k})^{p(x)}}{p(x)}\right)  \,\mathrm{d}x\\
&  \leq\frac{S^{N/2}}{N}+\int_{\Omega_{\delta}}\left(  \frac{(t_{k}%
w_{k})^{2^{\ast}}}{2^{\ast}}-\frac{(t_{k}w_{k})^{p(x)}}{p(x)}\right)
\,\mathrm{d}x,
\end{align*}
where we have used that the maximum of the function $t\in\lbrack
0,\infty)\longmapsto\frac{t^{2}}{2}-\frac{t^{2^{\ast}}}{2^{\ast}}$ is
$\frac{1}{N}$.

Combining the boundedness of the sequence $\left(  t_{k}\right)  $ with the
fact that $w_{k}\rightarrow0$ uniformly in $\Omega_{\delta},$ we can select
$k$ sufficiently large, such that $t_{k}w_{k}\leq1$ in $\Omega_{\delta}.$
Therefore, for this $k$, 
\begin{align*}
m  &  \leq\frac{S^{N/2}}{N}+\int_{\Omega_{\delta}}\left(  \frac{(t_{k}%
w_{k})^{2^{\ast}}}{2^{\ast}}-\frac{(t_{k}w_{k})^{2^{\ast}}}{p(x)}\right)
\,\mathrm{d}x\\
&  =\frac{S^{N/2}}{N}+t_{k}^{2^{\ast}}\int_{\Omega_{\delta}}(w_{k})^{2^{\ast}%
}\left(  \frac{1}{2^{\ast}}-\frac{1}{p(x)}\right)  \,\mathrm{d}x<\frac
{S^{N/2}}{N},
\end{align*}
since the latter integrand is strictly positive in $\Omega$ with has positive
$N$-dimensional Lebesgue measure.
\end{proof}

\subsection{Existence of a ground state solution.}

Our main result in this section is the following.

\begin{theorem}
\label{lemb8} Assume that \textrm{(}$\mathrm{H}_{1}$\textrm{)} holds. Then,
the problem (\ref{prb1}) has at least one positive ground state solution.
\end{theorem}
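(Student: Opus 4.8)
The plan is to obtain a ground state solution as the weak limit of a minimizing Palais--Smale sequence for $I$ on $\mathcal{N}$, using the strict inequality $m<\frac{1}{N}S^{N/2}$ of Proposition~\ref{lemb7} to prevent the loss of compactness caused by the critical exponent $2^{\ast}$.

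First I would take the sequence $(u_{n})\subset\mathcal{N}$ given by Proposition~\ref{lemb6}, so that $u_{n}\geq0$, $I(u_{n})\rightarrow m$ and $I^{\prime}(u_{n})\rightarrow0$ in $D^{-1}$. The inequality $\left(\frac{1}{2}-\frac{1}{p^{-}}\right)\left\Vert u_{n}\right\Vert_{1,2}^{2}\leq I(u_{n})$ obtained in (\ref{aux3}) shows that $(u_{n})$ is bounded in $D^{1,2}(\mathbb{R}^{N})$; passing to a subsequence, $u_{n}\rightharpoonup u$ in $D^{1,2}(\mathbb{R}^{N})$, $u_{n}\rightarrow u$ a.e.\ in $\mathbb{R}^{N}$ and $u\geq0$. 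A routine argument then gives $I^{\prime}(u)=0$: testing $I^{\prime}(u_{n})\rightarrow0$ against $\varphi\in C_{0}^{\infty}(\mathbb{R}^{N})$, in the term $\int_{\mathbb{R}^{N}}(u_{n}^{+})^{p(x)-1}\varphi\,\mathrm{d}x$ the integral over the bounded set $\Omega_{\delta}$ passes to the limit by compactness of the Sobolev embedding into $L^{s}_{\mathrm{loc}}$ for $s<2^{\ast}$ (recall $p(x)<2^{\ast}$ on $\Omega_{\delta}$ by ($\mathrm{H}_{1}c$)), while the integral over $\mathbb{R}^{N}\setminus\Omega_{\delta}$ passes to the limit because $\{(u_{n}^{+})^{2^{\ast}-1}\}$ is bounded in $L^{2^{\ast}/(2^{\ast}-1)}(\mathbb{R}^{N})$ and converges a.e.\ to $(u^{+})^{2^{\ast}-1}$.

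The heart of the proof --- and the step I expect to be the main obstacle, since it is exactly where the failure of compactness of $D^{1,2}(\mathbb{R}^{N})\hookrightarrow L^{2^{\ast}}(\mathbb{R}^{N})$ enters --- is to show that $u\not\equiv0$. Suppose, for contradiction, that $u\equiv0$. Since $\Omega_{\delta}$ is compact and $p$ is continuous, $P:=\max_{\Omega_{\delta}}p<2^{\ast}$; bounding $(u_{n}^{+})^{p(x)}\leq(u_{n}^{+})^{p^{-}}+(u_{n}^{+})^{P}$ pointwise on $\Omega_{\delta}$ and using the compact embeddings $D^{1,2}(\mathbb{R}^{N})\hookrightarrow L^{p^{-}}(\Omega_{\delta})$ and $D^{1,2}(\mathbb{R}^{N})\hookrightarrow L^{P}(\Omega_{\delta})$, one gets $\int_{\Omega_{\delta}}(u_{n}^{+})^{p(x)}\,\mathrm{d}x\rightarrow0$. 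Hence, from $u_{n}\in\mathcal{N}$,
\[
\left\Vert u_{n}\right\Vert_{1,2}^{2}=\int_{\Omega_{\delta}}(u_{n}^{+})^{p(x)}\,\mathrm{d}x+\int_{\mathbb{R}^{N}\setminus\Omega_{\delta}}(u_{n}^{+})^{2^{\ast}}\,\mathrm{d}x=\int_{\mathbb{R}^{N}\setminus\Omega_{\delta}}(u_{n}^{+})^{2^{\ast}}\,\mathrm{d}x+o(1).
\]
Writing $\ell:=\lim_{n}\left\Vert u_{n}\right\Vert_{1,2}^{2}$ (along a further subsequence; $\ell>0$ by (\ref{aux4})), the definition of $S$ in (\ref{prb0}) applied to $u_{n}^{+}$ forces $S\,\ell^{2/2^{\ast}}\leq\ell$, i.e.\ $\ell\geq S^{N/2}$. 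On the other hand,
\[
I(u_{n})=\frac{1}{2}\left\Vert u_{n}\right\Vert_{1,2}^{2}-\frac{1}{2^{\ast}}\int_{\mathbb{R}^{N}\setminus\Omega_{\delta}}(u_{n}^{+})^{2^{\ast}}\,\mathrm{d}x-\int_{\Omega_{\delta}}\frac{1}{p(x)}(u_{n}^{+})^{p(x)}\,\mathrm{d}x=\left(\frac{1}{2}-\frac{1}{2^{\ast}}\right)\left\Vert u_{n}\right\Vert_{1,2}^{2}+o(1),
\]
so $m=\frac{1}{N}\ell\geq\frac{1}{N}S^{N/2}$, contradicting Proposition~\ref{lemb7}. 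Therefore $u\not\equiv0$; combined with $u\geq0$ and $I^{\prime}(u)=0$, this gives $u\in\mathcal{N}$, hence $I(u)\geq m$.

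It remains to prove $I(u)\leq m$. Since $u_{n}\in\mathcal{N}$ we have $I^{\prime}(u_{n})(u_{n})=J(u_{n})=0$, so
\[
I(u_{n})=I(u_{n})-\frac{1}{2}I^{\prime}(u_{n})(u_{n})=\int_{\mathbb{R}^{N}}\left(\frac{1}{2}-\frac{1}{p(x)}\right)(u_{n}^{+})^{p(x)}\,\mathrm{d}x,
\]
and, the integrand being nonnegative (as $p(x)\geq2$) and $u_{n}^{+}\rightarrow u^{+}$ a.e., Fatou's lemma yields $\int_{\mathbb{R}^{N}}\left(\frac{1}{2}-\frac{1}{p(x)}\right)(u^{+})^{p(x)}\,\mathrm{d}x\leq\liminf_{n}I(u_{n})=m$. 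Applying the same identity to $u$ (using now $I^{\prime}(u)(u)=0$) gives $I(u)=\int_{\mathbb{R}^{N}}\left(\frac{1}{2}-\frac{1}{p(x)}\right)(u^{+})^{p(x)}\,\mathrm{d}x\leq m$. Hence $I(u)=m$, so by Proposition~\ref{lemb5} $u$ is a ground state solution of (\ref{prb1}); being a nontrivial critical point of $I$, it is positive by the strong maximum principle, as observed after Lemma~\ref{lemb1}. (One could alternatively run a Brezis--Lieb splitting of the sequence to obtain strong convergence directly, but the non-vanishing step above is in any case the only delicate point.)
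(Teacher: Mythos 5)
Your proposal is correct, and it reaches the conclusion by a genuinely different route in the one delicate step, the proof that the weak limit is nontrivial. The paper handles this with Lions' concentration--compactness lemma: assuming $u\equiv0$, it tests $I'(u_n)$ against cutoffs $\varphi_\epsilon$ concentrated at a putative atom $x_i$, derives $\mu(\{x_i\})\leq\nu_i$, combines this with part (ii) of Lions' lemma and the level estimate $m<\frac{1}{N}S^{N/2}$ to conclude the atomic set is empty, and only then obtains $u_n\to0$ in $L^{2^\ast}(\Omega_\delta)$, after which it runs essentially the same Sobolev-inequality contradiction you do. You bypass Lions' lemma entirely by observing that $\Omega_\delta$ is compact and $p$ is continuous with $p<2^\ast$ there (hypothesis ($\mathrm{H}_1c$)), so $P:=\max_{\Omega_\delta}p<2^\ast$ and the pointwise bound $(u_n^+)^{p(x)}\leq(u_n^+)^{p^-}+(u_n^+)^{P}$ together with the local Rellich compactness (already invoked in the paper as $u_n\to u$ in $L^s_{\mathrm{loc}}$, $s<2^\ast$) gives $\int_{\Omega_\delta}(u_n^+)^{p(x)}\,\mathrm{d}x\to0$ when $u\equiv0$; the global Sobolev inequality then forces $\lim\Vert u_n\Vert_{1,2}^2\geq S^{N/2}$ and hence $m\geq\frac{1}{N}S^{N/2}$, contradicting Proposition~\ref{lemb7}. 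This is more elementary, uses the strict level estimate only once, and exploits precisely the uniform subcriticality of $p$ on the compact set $\Omega_\delta$; the paper's concentration--compactness scheme is heavier but would adapt more readily to situations where one needs information about possible concentration points (e.g.\ if $p$ were allowed to reach $2^\ast$ inside $\Omega_\delta$). Your closing step also differs slightly and favorably: writing $I(u_n)=I(u_n)-\frac{1}{2}I'(u_n)(u_n)$ makes the norm term cancel, so only Fatou is needed, whereas the paper subtracts $\frac{1}{p^-}I'(u_n)(u_n)$ and additionally uses weak lower semicontinuity of the norm; both are valid. Two cosmetic remarks: once you know $I'(u)=0$ and $I(u)=m$, the conclusion that $u$ is a ground state is just Definition~\ref{GS}, so the appeal to Proposition~\ref{lemb5} is superfluous; and your sketch of $I'(u)=0$ (a.e.\ convergence plus boundedness of $(u_n^+)^{2^\ast-1}$ in $L^{2^\ast/(2^\ast-1)}$ off $\Omega_\delta$, Vitali/compactness on $\Omega_\delta$) is at least as detailed as the paper's one-line claim, so no gap there.
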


We prove this theorem throughout this subsection by using the following
well-known result.

\begin{lemma}
[Lions' Lemma]Let $(u_{n})$ be a sequence in $D^{1,2}(\mathbb{R}^{N}),$ $N>2,$ satisfying

\begin{itemize}
\item $u_{n}\rightharpoonup u\quad\mathrm{in}\,D^{1,2}(\mathbb{R}^{N});$

\item $\left\vert \nabla u_{n}\right\vert ^{2}\rightharpoonup\mu
\quad\mathrm{in}\,\mathcal{M}(\mathbb{R}^{N});$

\item $\left\vert u_{n}\right\vert ^{2^{\ast}}\rightharpoonup\nu
\quad\mathrm{in}\,\mathcal{M}(\mathbb{R}^{N}).$
\end{itemize}

Then, there exist an at most enumerable set of indices $J,$ points
$(x_{i})_{i\in J}$ and positive numbers $(\nu_{i})_{i\in J}$ such that

\begin{description}
\item[i)] $\nu=\left\vert u\right\vert ^{2^{\ast}}+\sum_{i\in J}\nu_{i}%
\delta_{x_{i}},$

\item[ii)] $\mu(\{x_{i}\})\geq\nu_{i}^{2/2^{\ast}}S$, $\quad\mathrm{for}%
\,$\textrm{any} $i\in J$,
\end{description}
where $\delta_{x_{i}}$ denotes the Dirac measure supported at $x_{i}$.
\end{lemma}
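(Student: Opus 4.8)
The plan is to prove this by the standard concentration-compactness argument, whose core is to first pass to the sequence $v_{n}:=u_{n}-u$, which converges weakly to $0$ in $D^{1,2}(\mathbb{R}^{N})$, and then to \emph{localize} the Sobolev inequality. First I would attach two auxiliary nonnegative measures to $v_{n}$. Since $\nabla v_{n}\rightharpoonup0$ in $L^{2}$, weak lower semicontinuity gives $|\nabla u|^{2}\leq\mu$ as measures (for $0\leq\phi\in C_{0}^{\infty}$ one has $\int\phi|\nabla u|^{2}\,\mathrm{d}x\leq\liminf\int\phi|\nabla u_{n}|^{2}\,\mathrm{d}x=\int\phi\,\mathrm{d}\mu$), so $\tilde{\mu}:=\mu-|\nabla u|^{2}$ is a bounded nonnegative measure and is the weak-$*$ limit of $|\nabla v_{n}|^{2}$. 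Passing to a subsequence so that $u_{n}\to u$ a.e., the Brezis--Lieb lemma gives $|u_{n}|^{2^{\ast}}-|v_{n}|^{2^{\ast}}\to|u|^{2^{\ast}}$ in $L^{1}_{\mathrm{loc}}$, so $|v_{n}|^{2^{\ast}}\rightharpoonup\tilde{\nu}:=\nu-|u|^{2^{\ast}}$ in $\mathcal{M}(\mathbb{R}^{N})$. The fact I will use repeatedly is that $v_{n}\to0$ strongly in $L^{2}_{\mathrm{loc}}(\mathbb{R}^{N})$, by the Rellich--Kondrachov compactness of $H^{1}\hookrightarrow L^{2}$ on bounded sets.

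Next I would derive a reverse Hölder inequality between $\tilde{\mu}$ and $\tilde{\nu}$. Fix $\phi\in C_{0}^{\infty}(\mathbb{R}^{N})$ and apply the Sobolev inequality (\ref{prb0}) to $\phi v_{n}$:
\[
S\left(\int_{\mathbb{R}^{N}}|\phi v_{n}|^{2^{\ast}}\,\mathrm{d}x\right)^{2/2^{\ast}}\leq\int_{\mathbb{R}^{N}}|\nabla(\phi v_{n})|^{2}\,\mathrm{d}x=\int\phi^{2}|\nabla v_{n}|^{2}+2\int\phi v_{n}\,\nabla\phi\cdot\nabla v_{n}+\int v_{n}^{2}|\nabla\phi|^{2}.
\]
Letting $n\to\infty$, the last two terms vanish because $\|v_{n}\|_{L^{2}(\mathrm{supp}\,\phi)}\to0$ while $\|v_{n}\|_{1,2}$ stays bounded (Cauchy--Schwarz on the cross term), and the surviving terms converge, yielding
\[
S\left(\int|\phi|^{2^{\ast}}\,\mathrm{d}\tilde{\nu}\right)^{2/2^{\ast}}\leq\int|\phi|^{2}\,\mathrm{d}\tilde{\mu},\qquad\forall\,\phi\in C_{0}^{\infty}(\mathbb{R}^{N}).
\]

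The heart of the proof, and the step I expect to be the main obstacle, is to extract the atomic structure of $\tilde{\nu}$ from this single inequality. By approximating characteristic functions of Borel sets, I would first upgrade the estimate to $S\,\tilde{\nu}(E)^{2/2^{\ast}}\leq\tilde{\mu}(E)$, i.e. $\tilde{\nu}(E)\leq S^{-2^{\ast}/2}\tilde{\mu}(E)^{2^{\ast}/2}$ for every Borel set $E$. Since $2^{\ast}/2>1$, this forces $\tilde{\nu}\ll\tilde{\mu}$, so the Radon--Nikodym theorem gives $\tilde{\nu}=g\,\tilde{\mu}$ with $g\geq0$; by the Besicovitch--Lebesgue differentiation theorem, for $\tilde{\mu}$-a.e.\ $x$ we have $g(x)=\lim_{r\to0}\tilde{\nu}(B_{r}(x))/\tilde{\mu}(B_{r}(x))\leq S^{-2^{\ast}/2}\lim_{r\to0}\tilde{\mu}(B_{r}(x))^{2^{\ast}/2-1}=0$ whenever $\tilde{\mu}(\{x\})=0$. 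Hence $g$ is supported on the (at most countable, since $\tilde{\mu}$ is bounded) set of atoms $\{x_{i}\}_{i\in J}$ of $\tilde{\mu}$, so $\tilde{\nu}=\sum_{i\in J}\nu_{i}\delta_{x_{i}}$ with $\nu_{i}:=\tilde{\nu}(\{x_{i}\})>0$. Recalling $\nu=|u|^{2^{\ast}}+\tilde{\nu}$ yields conclusion i).

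Finally, for conclusion ii) I would test the reverse Hölder inequality with a sequence of cut-offs $\phi$ concentrating at a fixed atom $x_{i}$: the left side tends to $S\,\tilde{\nu}(\{x_{i}\})^{2/2^{\ast}}=S\nu_{i}^{2/2^{\ast}}$ and the right side to $\tilde{\mu}(\{x_{i}\})$, whence $S\nu_{i}^{2/2^{\ast}}\leq\tilde{\mu}(\{x_{i}\})\leq\mu(\{x_{i}\})$, using $\tilde{\mu}=\mu-|\nabla u|^{2}\leq\mu$. I anticipate that the bookkeeping in the reduction to $v_{n}$ (identifying $\tilde{\mu}$, $\tilde{\nu}$ and justifying that the cross terms vanish via $L^{2}_{\mathrm{loc}}$ compactness) is routine, while the genuinely delicate point is the measure-theoretic passage from the integral inequality to atomicity, which rests on the differentiation theorem and the strict inequality $2^{\ast}/2>1$.
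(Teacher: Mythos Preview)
Your argument is correct and is precisely the standard proof of the concentration--compactness lemma of P.-L.~Lions. The paper, however, does not prove this statement at all: it is quoted as a well-known auxiliary tool and then applied in the proof of Theorem~\ref{lemb8}. Hence there is no proof in the paper to compare your proposal against; what you have written is essentially the textbook derivation (see for instance \cite{S90} or \cite{W96}). One small remark: your claim that $|\nabla v_{n}|^{2}\rightharpoonup\tilde{\mu}=\mu-|\nabla u|^{2}$ follows not from weak lower semicontinuity alone (that only gives $|\nabla u|^{2}\le\mu$) but from expanding $|\nabla v_{n}|^{2}=|\nabla u_{n}|^{2}-2\nabla u_{n}\cdot\nabla u+|\nabla u|^{2}$ and using the weak $L^{2}$-convergence $\nabla u_{n}\rightharpoonup\nabla u$ on the cross term; this is routine, as you say, but worth stating explicitly.
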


We know from Proposition \ref{lemb6} that there exists a sequence
$(u_{n})\subset\mathcal{N}$ satisfying: $u_{n}\geq0$ in $\ \mathbb{R}^{N},$
$I(u_{n})\rightarrow m,$ and $I^{\prime}(u_{n})\rightarrow0$ in $D^{-1}$.
Since $(u_{n})$ is bounded in $D^{1,2}(\mathbb{R}^{N})$, we can assume (by
passing to a subsequence) that there exists $u\in D^{1,2}(\mathbb{R}^{N})$
such that $u_{n}\rightharpoonup u$ in $D^{1,2}(\mathbb{R}^{N})$,
$u_{n}\rightarrow u$ in $L_{loc}^{s}(\mathbb{R}^{N})$ for $1\leq s<2^{\ast}$
and $u_{n}(x)\rightarrow u(x)$ a.e. in $\mathbb{R}^{N}$. Moreover, $\left\vert
\nabla u_{n}\right\vert ^{2}\rightharpoonup\mu$ and $\left\vert u_{n}%
\right\vert ^{2^{\ast}}\rightharpoonup\nu$ in $\mathcal{M}(\mathbb{R}^{N}).$

We claim that $u\not \equiv 0$. Indeed, let us suppose, by contradiction, that
$u\equiv0$. We affirm that this assumption implies that the set $J$ given by
Lions' lemma is empty. Otherwise, let us fix $i\in J,$ $x_{i}\in\mathbb{R}%
^{N},$ and $\nu_{i}>0$ as in the Lions' Lemma. Let $\varphi\in C_{c}^{\infty
}(\mathbb{R}^{N})$ such that%
\[
\varphi(x)=\left\{
\begin{array}
[c]{cc}%
1, & x\in B_{1}(0)\\
0, & x\notin B_{2}(0)
\end{array}
\right.
\]
and $0\leq\varphi(x)\leq1$ for all $x\in\mathbb{R}^{N}$, where $B_{1}$ and
$B_{2}$ denotes the balls centered at the origin, with radius $1$ and $2,$ respectively.

For $\epsilon>0$ fixed, define
\[
\varphi_{\epsilon}(x)=\varphi\left(  \frac{x-x_{i}}{\epsilon}\right)  .
\]
Since $(u_{n})$ is bounded in $D^{1,2}(\mathbb{R}^{N})$, the same holds for
the sequence $(\varphi_{\epsilon}u_{n}).$ Thus,
\[
\left\vert I^{\prime}(u_{n})(\varphi_{\epsilon}u_{n})\right\vert
\leq\left\Vert I^{\prime}(u_{n})\right\Vert _{D^{-1}}\left\Vert \varphi
_{\epsilon}u_{n}\right\Vert _{1,2}=o_{n}(1),
\]
so that
\[
\int_{\mathbb{R}^{N}}\nabla u_{n}\cdot\nabla(\varphi_{\epsilon}u_{n}%
)\,\mathrm{d}x=\int_{\mathbb{R}^{N}}(u_{n})^{p(x)}\varphi_{\epsilon
}\,\mathrm{d}x+o_{n}(1).
\]
Consequently,
\begin{equation}
\int_{\mathbb{R}^{N}}\varphi_{\epsilon}|\nabla u_{n}|^{2}\,\mathrm{d}%
x+\int_{\mathbb{R}^{N}}u_{n}\nabla u_{n}\cdot\nabla\varphi_{\epsilon
}\,\mathrm{d}x\leq\int_{\mathbb{R}^{N}}|u_{n}|^{p^{-}}\varphi_{\epsilon
}\,\mathrm{d}x+\int_{\mathbb{R}^{N}}|u_{n}|^{2^{\ast}}\varphi_{\epsilon
}\,\mathrm{d}x+o_{n}(1). \label{prb13}%
\end{equation}

According to Lions' lemma
\[
\int_{\mathbb{R}^{N}}|\nabla u_{n}|^{2}\varphi_{\epsilon}\,\mathrm{d}%
x\rightarrow\int_{\mathbb{R}^{N}}\varphi_{\epsilon}\,\mathrm{d}\mu
\quad\mathrm{and}\quad\int_{\mathbb{R}^{N}}|u_{n}|^{2^{\ast}}\varphi
_{\epsilon}\,\mathrm{d}x\rightarrow\int_{\mathbb{R}^{N}}\varphi_{\epsilon
}\,\mathrm{d}\nu.
\]

Since
\[
\left\vert \int_{\mathbb{R}^{N}}u_{n}\nabla u_{n}\cdot\nabla\varphi_{\epsilon
}\,\mathrm{d}x\right\vert \leq\left\Vert \nabla\varphi_{\epsilon}\right\Vert
_{\infty} \left(\int_{B_{2 \epsilon}(x_i)}|u_n|^{2}\,dx \right)^{\frac{1}{2}} \left\Vert u_{n}\right\Vert
_{1,2}\rightarrow0
\]
and
\[
\int_{\mathbb{R}^{N}}|u_{n}|^{p^{-}}\varphi_{\epsilon}\,\mathrm{d}%
x\rightarrow0
\]
it follows from (\ref{prb13}) that%
\begin{equation}
\int_{\mathbb{R}^{N}}\varphi_{\epsilon}\,\mathrm{d}\mu\leq\int_{\mathbb{R}%
^{N}}\varphi_{\epsilon}\,\mathrm{d}\nu,\quad\forall\,\epsilon>0. \label{prb14}%
\end{equation}
Now, making $\epsilon\rightarrow0$, we get
\[
\mu(\{x_{i}\})\leq\nu_{i}.
\]
Combining this inequality with the part \textrm{ii)} of Lions' lemma, we
obtain $\nu_{i}\geq S^{N/2}.$ It follows that
\[
S^{N/2}\leq S\nu_{i}^{2/2^{\ast}}\leq\mu(\{x_{i}\})\leq\nu_{i}.
\]

Let $\varphi\in C_{c}^{\infty}(\mathbb{R}^{N})$ such that $\varphi(x_{i})=1$
and $0\leq\varphi(x)\leq1$, for any $x\in\mathbb{R}^{N}$. Recalling that%
\[
I(u_{n})=I(u_{n})-\frac{1}{2^{\ast}}I^{\prime}(u_{n})(u_{n})=\frac{1}%
{N}\left\Vert u_{n}\right\Vert _{1,2}^{2}+\int_{\Omega_{\delta}}\left(
\frac{1}{2^{\ast}}-\frac{1}{p(x)}\right)  |u_{n}|^{p(x)}\mathrm{d}x,
\]
we have
\begin{equation}
I(u_{n})\geq\frac{1}{N}\int_{\mathbb{R}^{N}}\left\vert \nabla u_{n}\right\vert
^{2}\varphi\,\mathrm{d}x+\int_{\Omega_{\delta}}\left(  \frac{1}{2^{\ast}%
}-\frac{1}{p(x)}\right)  |u_{n}|^{p(x)}\mathrm{d}x. \label{prb15}%
\end{equation}

Since $p:\mathbb{R}\rightarrow\mathbb{R}$ is continuous, for each $\epsilon
>0$, there exists $\Omega_{\delta,\epsilon}\subset\Omega_{\delta}$ be such
that
\[
\left\vert \frac{1}{2^{\ast}}-\frac{1}{p(x)}\right\vert <\frac{\epsilon}%
{2M},\quad x\in\Omega_{\delta}\setminus\Omega_{\delta,\epsilon},
\]
where $M=\sup\limits_{n\in\mathbb{N}}\left(  \int_{\Omega_{\delta}}%
|u_{n}|^{p^{-}}+|u_{n}|^{2^{\ast}}\mathrm{d}x\right)  $. Thus,%
\begin{align*}
\left\vert \int_{\Omega_{\delta}}\left(  \frac{1}{2^{\ast}}-\frac{1}%
{p(x)}\right) |u_{n}|^{p(x)}\mathrm{d}x\right\vert  &  \leq\frac{\epsilon}%
{2M}\int_{\Omega_{\delta}\setminus\Omega_{\delta,\epsilon}}|u_{n}|%
^{p(x)}\mathrm{d}x+\left(  \frac{1}{p^{-}}-\frac{1}{2^{\ast}}\right)
\int_{\Omega_{\delta,\epsilon}}|u_{n}|^{p(x)}dx\\
&  \leq\frac{\epsilon}{2M}\int_{\Omega_{\delta}}(|u_{n}|
^{p^{-}}+\left\vert u_{n}\right\vert ^{2^{\ast}})dx+\left(  \frac{1}{p^{-}%
}-\frac{1}{2^{\ast}}\right)  \int_{\Omega_{\delta,\epsilon}}(\left\vert
u_{n}\right\vert ^{p^{-}}+\left\vert u_{n}\right\vert ^{q})dx\\
&  \leq\frac{\epsilon}{2}+\left(  \frac{1}{p^{-}}-\frac{1}{2^{\ast}}\right)
\int_{\Omega_{\delta}}(\left\vert u_{n}\right\vert ^{p^{-}}+\left\vert
u_{n}\right\vert ^{q})dx,
\end{align*}
where $2<p^{-}\leq p(x)\leq q<2^{\ast}$, for $x\in\Omega_{\delta,\epsilon}$.
Then, since $u_{n}\rightarrow0$ in $L_{loc}^{s}(\mathbb{R}^{N})$, for
$s\in\lbrack1,2^{\ast})$, and $\epsilon$ is arbitrary, we conclude that
\[
\lim_{n\rightarrow\infty}\int_{\Omega_{\delta}}\left(  \frac{1}{2^{\ast}%
}-\frac{1}{p(x)}\right) |u_{n}|^{p(x)}\mathrm{d}x=0.
\]
Therefore, by making $n\rightarrow\infty$ in (\ref{prb15}) we obtain
\[
m\geq\frac{1}{N}\int_{\mathbb{R}^{N}}\varphi\,d\mu\geq\frac{1}{N}\int
_{\{x_{i}\}}\varphi\,d\mu=\frac{1}{N}\mu(\{x_{i}\})\geq\frac{1}{N}S^{N/2}%
\]
which contradicts the Proposition \ref{lemb7}, showing that $J=\emptyset$.
Hence, it follows from Lions' lemma that
\[
u_{n}\rightarrow0\quad\mathrm{in}\,L_{loc}^{2^{\ast}}(\mathbb{R}^{N}).
\]
In particular, $u_{n}\rightarrow0$ in $L^{2^{\ast}}(\Omega_{\delta}),$ so that%
\[
0\leq\int_{\Omega_{\delta}}|u_{n}|^{p(x)}\mathrm{d}x\leq\int_{\Omega_{\delta}%
}\left\vert u_{n}\right\vert ^{p^{-}}\mathrm{d}x+\int_{\Omega_{\delta}%
}\left\vert u_{n}\right\vert ^{2^{\ast}}\mathrm{d}x\rightarrow0.
\]
Since $(u_{n})\subset\mathcal{N}$, we have
\begin{align*}
\lim_{n\rightarrow\infty}\left\Vert u_{n}\right\Vert _{1,2}^{2}  &
=\lim_{n\rightarrow\infty}\left(  \int_{\Omega_{\delta}}\left\vert
u_{n}\right\vert ^{p(x)}\mathrm{d}x+\int_{\mathbb{R}^{N}\setminus\Omega_\delta
}\left\vert u_{n}\right\vert ^{2^{\ast}}\mathrm{d}x\right) \\
&  =\lim_{n\rightarrow\infty}\int_{\mathbb{R}^{N}\setminus\Omega_{\delta}%
}\left\vert u_{n}\right\vert ^{2^{\ast}}\mathrm{d}x=:L.
\end{align*}
Thus, by making $n\rightarrow\infty$ in the equality
\[
I(u_{n})=\frac{1}{2}\left\Vert u_{n}\right\Vert _{1,2}^{2}-\int_{\Omega
_{\delta}}\frac{\left\vert u_{n}\right\vert ^{p(x)}}{p(x)}\mathrm{d}x-\frac
{1}{2^{\ast}}\int_{\mathbb{R}^{N}\setminus\Omega_{\delta}}\left\vert
u_{n}\right\vert ^{2^{\ast}}\mathrm{d}x
\]
we obtain
\[
0<m=\frac{1}{2}L-\frac{1}{2^{\ast}}L=\frac{1}{N}L.
\]

Since
\[
S\leq\frac{\left\Vert u_{n}\right\Vert _{1,2}^{2}}{\left\Vert u_{n}\right\Vert
_{2^{\ast}}^{2}}\leq\frac{\left\Vert u_{n}\right\Vert _{1,2}^{2}}{\left(
\int_{\mathbb{R}^{N}\setminus\Omega_{\delta}}\left\vert u_{n}\right\vert
^{2^{\ast}}\mathrm{d}x\right)  ^{2/2^{\ast}}}%
\]
we obtain $m=\frac{L}{N}\geq\frac{1}{N}S^{N/2}$, which contradicts Proposition
\ref{lemb7} and proves that $u\not \equiv 0$.

Now, combining the weak convergence
\[
u_{n}\rightharpoonup u\quad\mathrm{in}\,D^{1,2}(\mathbb{R}^{N})
\]
with the fact that $I^{\prime}(u_{n})\rightarrow0$ in $D^{-1}$ we conclude
that
\[
I^{\prime}(u)(v)=0\quad\forall\,v\in D^{1,2}(\mathbb{R}^{N}),
\]
meaning that $u$ is a nontrivial critical point of $I.$

Thus, taking into account Proposition \ref{lemb5}, in order to complete the
proof that $u$ is a ground state solution for (\ref{prb1}) we need to verify
that $I(u)=m$. Indeed, since
\[
I(u_{n})=\left(  \frac{1}{2}-\frac{1}{p^{-}}\right)  \Vert u_{n}\Vert
_{1,2}^{2}+\int_{\mathbb{R}^{N}}\left(  \frac{1}{p^{-}}-\frac{1}{p(x)}\right)
u_{n}^{p(x)}\mathrm{d}x,
\]
the weak convergence $u_{n}\rightharpoonup u$ in $D^{1,2}(\mathbb{R}^{N})$ and
Fatou's Lemma imply that%
\begin{align*}
m  &  \geq\left(  \frac{1}{2}-\frac{1}{p^{-}}\right)  \liminf_{n\rightarrow
\infty}\Vert u_{n}\Vert_{1,2}+\liminf_{n\rightarrow\infty}\int_{\mathbb{R}%
^{N}}\left(  \frac{1}{p^{-}}-\frac{1}{p(x)}\right)  u_{n}^{p(x)}\mathrm{d}x\\
&  \geq\left(  \frac{1}{2}-\frac{1}{p^{-}}\right)  \Vert u\Vert_{1,2}%
+\int_{\mathbb{R}^{N}}\left(  \frac{1}{p^{-}}-\frac{1}{p(x)}\right)
u^{p(x)}\mathrm{d}x\\
&  =I(u)-\frac{1}{p^{-}}I^{\prime}(u)(u)=I(u)\geq m,
\end{align*}
showing that $I(u)=m$.  

\section{The semilinear elliptic problem in a bounded domain\label{Sec4}}

In this section we consider the elliptic problem%
\begin{equation}
\left\{
\begin{array}
[c]{ll}%
-\Delta u=u^{p(x)-1},\quad u>0 & \mathrm{in}\quad G\\
u=0 & \mathrm{on}\quad\partial G,
\end{array}
\right.  \label{prb23}%
\end{equation}
where $G$ is a smooth bounded domain of $\mathbb{R}^{N}$, $N\geq3,$ and
$p:G\rightarrow\mathbb{R}$ is a continuous function verifying $\mathrm{(H}%
_{1}\mathrm{)}$ and an additional hypothesis $\mathrm{(H}_{2}\mathrm{),}$
which will be stated in the sequel.

We recall that the usual norm in $H_{0}^{1}(G)$ is given by
\[
\Vert u\Vert:=\Vert\nabla u\Vert_{2}=\left(  \int_{G}|\nabla u|^{2}%
\mathrm{d}x\right)  ^{\frac{1}{2}}.
\]
We denote the dual space of $H_{0}^{1}(G)$ by $H^{-1}$.

The energy functional $I:H_{0}^{1}(G)\rightarrow\mathbb{R}$ associated with
the problem (\ref{prb23}) is defined by
\[
I(u):=\frac{1}{2}\int_{G}\left\vert \nabla u\right\vert ^{2}\mathrm{d}%
x-\int_{G}\frac{1}{p(x)}(u^{+})^{p(x)}\,\mathrm{d}x.
\]
It belongs to $C^{1}(H_{0}^{1}(G),\mathbb{R})$ and its derivative is given by
\begin{equation}
I^{\prime}(u)(v)=\int_{G}\nabla u\cdot\nabla v\,\mathrm{d}x-\int_{G}%
(u^{+})^{p(x)-1}v\,\mathrm{d}x,\quad\forall\,u,v\in H_{0}^{1}(G).
\label{prb26}%
\end{equation}

Thus, a function $u\in H_{0}^{1}(G)$ is a weak solution of (\ref{prb23}) if,
and only if, $u$ is a critical point of $I$. Moreover, as in the previous
section, the nontrivial critical points of $I$ are positive.

In the sequel, we will use the same notations of Section \ref{Sec3}. Thus,%
\[
J(u):=I^{\prime}(u)(u)=\left\Vert u\right\Vert ^{2}-\int_{G}(u^{+}%
)^{p(x)}\,\mathrm{d}x,
\]
the Nehari manifold associated with (\ref{prb23}) is defined by%
\[
\mathcal{N}:=\{u\in H_{0}^{1}(G)\setminus\{0\}:J(u)=0\},
\]
and
\[
m:=\inf_{u\in\mathcal{N}}I(u).
\]
Arguing as in the proof of Proposition \ref{lemb2} we can guarantee that $m>0$.

\begin{definition}
\label{GS2}We say that $u\in\mathcal{N}$ is a \textit{ground state solution}
for (\ref{prb23}) if $I^{\prime}(u)=0$ and $I(u)=m.$
\end{definition}

We gather in the next lemma some results that can be proved as in Section
\ref{Sec3}.

\begin{lemma}
\label{lemb12} Assume $\mathrm{(H}_{1}\mathrm{)}$. We claim that:

\begin{enumerate}
\item[(i)] $J^{\prime}(u)(u)<0$ for all $u\in\mathcal{N}.$ (Thus, $J^{\prime
}(u)\neq0$, for all $u\in\mathcal{N}.$)

\item[(ii)] If $I(u_{0})=m,$ then $I^{\prime}(u_{0})=0.$ (Thus, $u_{0}$ is a
weak solution of (\ref{prb23}).)

\item[(iii)] There exists a sequence $(u_{n})\subset\mathcal{N}$ such that
$u_{n}\geq0$ in $G,$ $I(u_{n})\rightarrow m$ and $I^{\prime}(u_{n}%
)\rightarrow0$ em $H^{-1}$.
\end{enumerate}
\end{lemma}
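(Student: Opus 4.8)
The plan is to mirror, almost verbatim, the arguments already developed in Section~\ref{Sec3}, since the only structural difference between the whole-space problem (\ref{prb1}) and the bounded-domain problem (\ref{prb23}) is the replacement of $D^{1,2}(\mathbb{R}^{N})$ by $H_{0}^{1}(G)$ and of $\mathbb{R}^{N}$ by $G$. For item (i), I would simply repeat the computation in the proof of Proposition~\ref{lemb4}: for $u\in\mathcal{N}$ one has $J^{\prime}(u)(u)=2\Vert u\Vert^{2}-\int_{G}p(x)(u^{+})^{p(x)}\,\mathrm{d}x\le(2-p^{-})\Vert u\Vert^{2}$, and the lower bound $\Vert u\Vert\ge\eta>0$ on $\mathcal{N}$ (obtained exactly as in Proposition~\ref{lemb2}, using the analogue of the estimate (\ref{aux1})--(\ref{aux2}) with $\Omega_{\delta}$ now a subset of the bounded domain $G$) gives $J^{\prime}(u)(u)\le(2-p^{-})\eta^{2}<0$; in particular $J^{\prime}(u)\neq0$.

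For item (ii), I would invoke the Lagrange multiplier theorem exactly as in Proposition~\ref{lemb5}: if $I(u_{0})=m$ with $u_{0}\in\mathcal{N}$, then $I^{\prime}(u_{0})=\lambda J^{\prime}(u_{0})$ for some $\lambda\in\mathbb{R}$; testing against $u_{0}$ and using $J(u_{0})=I^{\prime}(u_{0})(u_{0})=0$ yields $\lambda J^{\prime}(u_{0})(u_{0})=0$, and since part~(i) gives $J^{\prime}(u_{0})(u_{0})\neq0$ we get $\lambda=0$, hence $I^{\prime}(u_{0})=0$; the strong maximum principle then makes $u_{0}$ positive, so it is a weak solution of (\ref{prb23}). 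For item (iii), I would run the Ekeland variational principle on $\mathcal{N}$ as in the proof of Proposition~\ref{lemb6}: it produces $(u_{n})\subset\mathcal{N}$ and $(\lambda_{n})\subset\mathbb{R}$ with $I(u_{n})\to m$ and $I^{\prime}(u_{n})-\lambda_{n}J^{\prime}(u_{n})\to0$ in $H^{-1}$; the identity $I(u_{n})-\frac{1}{p^{-}}I^{\prime}(u_{n})(u_{n})\ge(\frac12-\frac1{p^{-}})\Vert u_{n}\Vert^{2}$ shows $(u_{n})$ is bounded in $H_{0}^{1}(G)$, so testing the convergence against $u_{n}$ and using $I^{\prime}(u_{n})(u_{n})=0$ together with part~(i) forces $\lambda_{n}\to0$, whence $I^{\prime}(u_{n})\to0$ in $H^{-1}$; finally, replacing $u_{n}$ by $u_{n}^{+}$ and using $\Vert u_{n}^{-}\Vert^{2}=I^{\prime}(u_{n})(u_{n}^{-})\to0$ preserves both $I(u_{n}^{+})\to m$ and $I^{\prime}(u_{n}^{+})\to0$, giving the nonnegative Palais--Smale sequence.

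The only point that requires a word of care — and the place I expect the mild obstacle to be — is checking that the preliminary estimate (\ref{aux1})--(\ref{aux2}) still holds in the bounded-domain setting; but this is immediate, because the Sobolev embedding $H_{0}^{1}(G)\hookrightarrow L^{2^{\ast}}(G)$ holds with the same best constant $S$ (indeed $H_{0}^{1}(G)\subset D^{1,2}(\mathbb{R}^{N})$ after extension by zero), and $\Omega_{\delta}$ still has finite Lebesgue measure since it is bounded, so H\"older's inequality applies verbatim. Everything else — the $C^{1}$ regularity of $I$, the formula (\ref{prb26}), the positivity of nontrivial critical points via the strong maximum principle, and the well-definedness of $m>0$ — has already been recorded in the text preceding the lemma. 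Hence the proof amounts to a sentence-by-sentence translation of Propositions~\ref{lemb2}, \ref{lemb4}, \ref{lemb5}, and \ref{lemb6}, and I would present it as such, writing ``the proof is identical to that of Proposition~X with $\mathbb{R}^{N}$ replaced by $G$ and $D^{1,2}(\mathbb{R}^{N})$ by $H_{0}^{1}(G)$'' for each of the three items.
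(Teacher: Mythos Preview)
Your proposal is correct and matches the paper's own treatment: the paper does not give a separate proof of this lemma, but simply states that the three items ``can be proved as in Section~\ref{Sec3}'', i.e., by repeating the arguments of Propositions~\ref{lemb2}, \ref{lemb4}, \ref{lemb5}, and \ref{lemb6} with $D^{1,2}(\mathbb{R}^{N})$ replaced by $H_{0}^{1}(G)$. Your additional remark about why the estimate (\ref{aux1})--(\ref{aux2}) carries over is accurate and is the only point worth making explicit.
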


Before establishing our main result in this section, we need to fix some
notation. Let $U\subset\mathbb{R}^{N}$ a bounded domain and define
\begin{equation}
S_{q}(U):=\inf\left\{  \frac{\Vert\nabla v\Vert_{L^{2}(U)}^{2}}{\Vert
v\Vert_{L^{q}(U)}^{2}}:v\in H_{0}^{1}(U)\setminus\{0\}\right\}  ,\quad1\leq
q\leq2^{\ast}. \label{SqBR}%
\end{equation}

It is well known that if $1\leq q<2^{\ast}$ then the infimum in (\ref{SqBR})
is attained by a positive function in $H_{0}^{1}(U).$ Actually, this follows
from the compactness of the embedding $H_{0}^{1}(U)\hookrightarrow L^{q}(U).$

Another well-known fact is that in the case $q=2^{\ast}$ the infimum in
(\ref{SqBR}) coincides with the best Sobolev constant, i.e.
\begin{equation}
S_{q}(U)=S:=\inf\left\{  \frac{\Vert\nabla v\Vert_{L^{2}(\mathbb{R}^{N})}^{2}%
}{\Vert v\Vert_{L^{2^{\ast}}(\mathbb{R}^{N})}^{2}}:v\in D^{1,2}(\mathbb{R}%
^{N})\setminus\{0\}\right\}  . \label{SqS}%
\end{equation}
Moreover, the infimum (\ref{SqS}) is not attained if $U$ is a proper subset of
$\mathbb{R}^{N}.$

Let us also define%

\[
g(q):=\left(  \frac{1}{2}-\frac{1}{q}\right)  S_{q}(U)^{\frac{q}{q-2} },\quad
q\in (2,2^{\ast}].
\]

Since the function $q\in[1,2^{\ast}] \rightarrow S_{q}(U)$ is continuous (see
\cite{EA12}), we have%

\begin{equation}
S_{q}(U)\rightarrow S_{2}(U)\quad\mathrm{as}\,q\rightarrow2\ (q>2)
\label{prb28}%
\end{equation}

If we consider $S_{2}(U)<1$, then $\log S_{2}(U)\leq0$ and since the function
$q\rightarrow g(q)$ is continuous, it follows that%

\[
g(q)=\left(  \frac{1}{2}-\frac{1}{q}\right)  \exp\left(  \frac{q}{q-2}\log
S_{q}(U)\right)  \rightarrow0,\quad\mathrm{as}\,q\rightarrow2,\ q>2.
\]

Taking into account that $S_{2^{\ast}}(U)=S$, we can easily check that
$g(2^{\ast})=\frac{1}{N}S^{\frac{N}{2}}.$ Thus, denoting%

\[
\bar{q}:=\min\{q\in(2,2^{\ast}]:g(q)=\frac{1}{N}S^{\frac{N}{2}}\}
\]

we have, by continuity,%

\[
g(q)<g(\bar{q}),\quad\forall\ q\in(2,\bar{q}).
\]

We have proved the following lemma.

\begin{lemma}
\label{leaux} If $S_{2}(U)<1$, then there exists $\bar{q}\in (2,2^{\ast
}]$ such that
\[
g(q)<g(\bar{q})=\frac{1}{N}S^{\frac{N}{2}},\quad\forall\ q\in(2,\bar{q}).
\]

\end{lemma}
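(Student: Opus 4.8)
The plan is to unwind the definitions carefully and show that all the pieces the authors have already assembled in the paragraphs preceding the lemma statement really do glue together into a valid proof. Concretely, I would proceed as follows. First, I would record the two boundary values of $g$ on the interval $(2,2^{\ast}]$: on the one hand, $g(2^{\ast}) = \left(\frac12 - \frac{1}{2^{\ast}}\right) S_{2^{\ast}}(U)^{\frac{2^{\ast}}{2^{\ast}-2}}$, and since $S_{2^{\ast}}(U) = S$ and $\frac12 - \frac{1}{2^{\ast}} = \frac1N$ together with $\frac{2^{\ast}}{2^{\ast}-2} = \frac N2$, this gives exactly $g(2^{\ast}) = \frac1N S^{N/2}$. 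On the other hand, I would justify that $g(q) \to 0$ as $q \to 2^{+}$: writing $g(q) = \left(\frac12 - \frac1q\right)\exp\!\left(\frac{q}{q-2}\log S_q(U)\right)$, the hypothesis $S_2(U) < 1$ gives $\log S_2(U) < 0$, so by the continuity $S_q(U) \to S_2(U)$ (equation (\ref{prb28})) we have $\log S_q(U) \to \log S_2(U) < 0$, while $\frac{q}{q-2} \to +\infty$; hence the exponent tends to $-\infty$ and the exponential tends to $0$, which dominates the bounded prefactor.

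Next, I would invoke continuity of $g$ on $(2,2^{\ast}]$ — which follows from the continuity of $q \mapsto S_q(U)$ on $[1,2^{\ast}]$ cited from \cite{EA12} together with the elementary continuity of $q \mapsto \left(\frac12 - \frac1q\right)$ and $q \mapsto \frac{q}{q-2}$ on that range — and apply the intermediate value theorem. Since $g(q) \to 0$ as $q \to 2^{+}$ and $g(2^{\ast}) = \frac1N S^{N/2} > 0$, the value $\frac1N S^{N/2}$ is attained, so the set $\{q \in (2,2^{\ast}] : g(q) = \frac1N S^{N/2}\}$ is nonempty; being a nonempty closed subset of the compact-after-adjoining-endpoint interval (it is closed because it is the preimage of a point under the continuous map $g$, intersected with $(2,2^{\ast}]$, and it stays away from $2$ since $g \to 0$ there), it has a minimum. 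I would define $\bar q$ to be that minimum, noting $\bar q > 2$ and $\bar q \le 2^{\ast}$, and record $g(\bar q) = \frac1N S^{N/2}$.

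Finally, to get the strict inequality $g(q) < g(\bar q)$ for all $q \in (2,\bar q)$, I would argue by contradiction combined with minimality: suppose some $q_0 \in (2,\bar q)$ has $g(q_0) \ge \frac1N S^{N/2}$. If $g(q_0) = \frac1N S^{N/2}$ this directly contradicts $\bar q$ being the \emph{minimum} of the level set. If $g(q_0) > \frac1N S^{N/2}$, then since $g(q) \to 0 < \frac1N S^{N/2}$ as $q \to 2^{+}$, the intermediate value theorem applied on $(2, q_0)$ produces a point $q_1 \in (2, q_0) \subset (2, \bar q)$ with $g(q_1) = \frac1N S^{N/2}$, again contradicting minimality of $\bar q$. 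Hence $g(q) < \frac1N S^{N/2} = g(\bar q)$ for every $q \in (2,\bar q)$, which is the claim.

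I do not expect a serious obstacle here — this lemma is essentially a packaging of the displayed computations already carried out in the text, and the only inputs beyond elementary real analysis are the two cited facts (the continuity $q \mapsto S_q(U)$ from \cite{EA12} and the identification $S_{2^{\ast}}(U) = S$). The one point that deserves a careful word rather than a hand-wave is the precise argument that the level set has a minimum strictly greater than $2$ and that nothing below $\bar q$ reaches the level $\frac1N S^{N/2}$; the contradiction-via-IVT argument above handles both cleanly, so I would present that explicitly rather than just asserting "by continuity" as the excerpt does.
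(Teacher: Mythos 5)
Your proposal is correct and follows essentially the same route as the paper: the same boundary computations ($g(q)\to 0$ as $q\to 2^{+}$ using $S_{2}(U)<1$, and $g(2^{\ast})=\frac{1}{N}S^{N/2}$ via $S_{2^{\ast}}(U)=S$), the same definition of $\bar{q}$ as the minimum of the level set $\{q\in(2,2^{\ast}]:g(q)=\frac{1}{N}S^{N/2}\}$, and the conclusion $g(q)<g(\bar{q})$ on $(2,\bar{q})$ by continuity. Your only additions are the careful justifications that the level set has a minimum strictly greater than $2$ and the IVT contradiction argument, which merely make explicit what the paper compresses into ``by continuity.''
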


The additional condition $\mathrm{(H}_{2}\mathrm{)}$ is the following, where
$\Omega$ and $p^{-}$ are defined in ($H_{1}$):

\begin{description}
\item[($H_{2}$)] There exists a subdomain $U$ of $\Omega$ such that
$S_{2}(U)<1$ and%
\[
p(x)\equiv q,\quad\forall\ x\in U,
\]

where $p^{-}\leq q<\min\{\bar{q},p^{+}\}$ and $\bar{q}$ is given by Lemma
\ref{leaux}.
\end{description}

\begin{lemma}
Assume that $\mathrm{(H}_{1}\mathrm{)}$ and $\mathrm{(H}_{2}\mathrm{)}$ hold.
Then $m<\frac{1}{N}S^{\frac{N}{2}}$.
\end{lemma}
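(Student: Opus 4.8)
The plan is to bound $m$ from above by evaluating the functional $I$ on a suitable multiple of a test function supported in the subdomain $U$, where $p(x)\equiv q$ with $q\in[p^-,\min\{\bar q,p^+\})$. Let $v\in H_0^1(U)$ be a positive minimizer realizing $S_q(U)$ (which exists since $q<2^\ast$, by the compact embedding mentioned in the excerpt); extend $v$ by zero to all of $G$, so $v\in H_0^1(G)$ with $v^+=v\not\equiv0$ and $\operatorname{supp}v\subset U\subset\Omega$. Since $v$ is supported in $U$, where $p\equiv q$, the energy simplifies: for $t>0$ we have $I(tv)=\frac{t^2}{2}\|v\|^2-\frac{t^{q}}{q}\int_U v^{q}\,\mathrm{d}x$, a one-variable function whose behavior we understand completely.

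First I would, as in Proposition~\ref{lemb3}, find the unique $t_v>0$ with $t_v v\in\mathcal N$ and note $m\le I(t_v v)=\max_{t>0}I(tv)$. A direct computation of the maximum of $t\mapsto \frac{t^2}{2}A-\frac{t^{q}}{q}B$ (with $A=\|v\|^2$, $B=\int_U v^q$) gives the maximum value $\left(\frac12-\frac1q\right)A^{q/(q-2)}B^{-2/(q-2)}$. Next I would choose $v$ normalized so that $\|v\|^2/\|v\|_{L^q(U)}^2$ is arbitrarily close to $S_q(U)$ — or simply take $v$ to be an exact minimizer — so that this maximum value equals (or is arbitrarily close to) $\left(\frac12-\frac1q\right)S_q(U)^{q/(q-2)}=g(q)$. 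Hence $m\le g(q)$.

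Finally I would invoke Lemma~\ref{leaux}: since $\mathrm{(H}_2\mathrm{)}$ gives $S_2(U)<1$ and $q<\bar q$, we have $g(q)<g(\bar q)=\frac1N S^{N/2}$, which yields the desired strict inequality $m<\frac1N S^{N/2}$. If an exact minimizer is used, the chain $m\le g(q)<\frac1N S^{N/2}$ is immediate; if an approximating sequence is used instead, one picks $q'\in(q,\bar q)$, takes $v$ close enough to optimal that $m\le g(q)+\varepsilon<g(q')\le g(\bar q)=\frac1N S^{N/2}$ for small $\varepsilon$, using continuity of $g$.

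The only genuine subtlety is making sure the test function lands in the right function space and that $p$ really is constant on its support: this is exactly why $\mathrm{(H}_2\mathrm{)}$ requires $U$ to be a \emph{subdomain of} $\Omega$ (so that $v\in H_0^1(U)\subset H_0^1(G)$ after zero extension, and the values of $p$ outside $U$ never enter). A secondary point is the requirement $q<p^+$ (part of $\mathrm{(H}_2\mathrm{)}$ and of $\mathrm{(H}_1\mathrm{)}$'s compatibility), but for this lemma the effective constraints are $q>2$ (so $g(q)$ is defined and the power $t^q$ dominates for large $t$, guaranteeing $t_v$ exists) and $q<\bar q$ (so Lemma~\ref{leaux} applies). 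Everything else is the routine one-variable optimization already carried out in spirit in Proposition~\ref{lemb7}.
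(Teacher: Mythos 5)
Your proposal is correct and follows essentially the same route as the paper: extend the exact minimizer of $S_q(U)$ by zero, use that $p\equiv q$ on $U$ to reduce $I(tv)$ to the one-variable function $\frac{t^2}{2}A-\frac{t^q}{q}B$, project onto the Nehari manifold to get $m\le g(q)$, and conclude via Lemma \ref{leaux} that $g(q)<g(\bar q)=\frac{1}{N}S^{\frac{N}{2}}$. The extra remarks about approximating sequences are unnecessary but harmless, since the minimizer exists exactly as you note.
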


\begin{proof}
Let $\phi_{q}\in H_{0}^{1}(U)$ denote a positive extremal function of
$S_{q}(U).$ Thus, $\phi_{p}>0$ in $U$ and
\[
S_{q}(U)=\frac{\Vert\nabla\phi_{q}\Vert_{L^{2}(U)}^{2}}{\Vert\phi_{q}%
\Vert_{L^{q}(U)}^{2}}.
\]
Let us define the function $\widetilde{\phi}_{q}\in H_{0}^{1}(G)$ by
\[
\widetilde{\phi}_{q}(x):=\left\{
\begin{array}
[c]{lll}%
\phi_{q}(x) & \mathrm{if} & x\in U\\
0 & \mathrm{if} & x\in G\setminus U.
\end{array}
\right.
\]
For each $t>0$ we have
\[
I(t\widetilde{\phi}_{q})=\frac{t^{2}}{2}\int_{G}\left\vert \nabla
\widetilde{\phi}_{q}\right\vert ^{2}\mathrm{d}x-\int_{G}\frac{t^{p(x)}}%
{p(x)}(\widetilde{\phi}_{q})^{p(x)}\,\mathrm{d}x=\frac{\alpha}{2}t^{2}%
-\frac{\beta}{q}t^{q},
\]
where%
\[
\alpha:=\int_{U}|\nabla\phi_{q}|^{2}\mathrm{d}x\quad\mathrm{and}\quad
\beta:=\int_{U}(\phi_{q})^{q}\,\mathrm{d}x.
\]
Fixing
\[
t_{q}:=(\alpha/\beta)^{\frac{1}{q-2}}%
\]
it is easy to see that $t_{q}\widetilde{\phi}\in\mathcal{N}$ and
\begin{equation}
I(t_{q}\widetilde{\phi}_{q})=\left(  \frac{1}{2}-\frac{1}{q}\right)  \left(
\frac{\alpha^{q}}{\beta^{2}}\right)  ^{\frac{1}{q-2}}=\left(  \frac{1}%
{2}-\frac{1}{q}\right)  S_{q}(U)^{\frac{q}{q-2}}. \label{prb27}%
\end{equation}

Since $S_{2}(U)<1$ and $p^{-}\leq q<\min\{p^{+},\bar{q}\}$, it follows from
Lemma \ref{leaux} that
\[
g(q)=I(t_{q}\widetilde{\phi}_{q})<\frac{1}{N}S^{\frac{N}{2}}.
\]
This implies that $m<\frac{1}{N}S^{\frac{N}{2}}.$
\end{proof}

In the next, we present sufficient conditions for the inequality $S(U)<1$ to
hold when $U$ is either a ball or an annulus. We will denote by $B_{R}(y)$ the
ball centered at $y$ with radius $R>0.$ When $y=0$ we will write simply
$B_{R}.$

\begin{example}
Let $U=B_{R}(y)\subset\Omega.$ Since the Laplacian operator is invariant under
translations, $S_{2}(B_{R})=S_{2}(B_{R}(y))$. Moreover, a simple scaling
argument yields
\begin{equation}
S_{2}(B_{R})=R^{-2}S_{2}(B_{1}). \label{prb29}%
\end{equation}
So, if $R>S_{2}(B_{1})^{\frac{1}{2}}$ then $S_{2}(U)=S_{2}(B_{R}(y))<1.$
\end{example}

\begin{example}
Let $U=A_{R,r}:=B_{R}(y)\setminus\overline{B_{r}(z)}\subset\Omega$, with
$\overline{B_{r}(z)}\subset B_{R}(y)$, for some $y,z\in\Omega$ and $R>r>0$.
Since the Laplacian operator is invariant under orthogonal transformations, it
can be seen that
\[
S_{2}(B_{R}\setminus\overline{B_{r}(se_{1})})=S_{2}(A_{R,r})
\]
for some $s\in\lbrack0,R-r)$ where $e_{1}$ denotes the first coordinate
vector. According to Proposition 3.2 in \cite{K03}, the function $t\rightarrow
S_{2}(B_{R}\setminus\overline{B_{r}(te_{1})})$ is strictly decreasing for
$t\in\lbrack0,R-r).$ Therefore,
\begin{equation}
S_{2}(B_{R}\setminus\overline{B_{r}})>S_{2}(A_{R,r}). \label{prb32}%
\end{equation}
Since $B_{(R-r)/2}$ is the largest ball contained in $B_{R}\setminus
\overline{B_{r}}$ we have
\begin{equation}
S_{2}(B_{R}\setminus\overline{B_{r}})<S_{2}(B_{(R-r)/2})=\left(  \frac{R-r}%
{2}\right)  ^{-2}S_{2}(B_{1}). \label{prb33}%
\end{equation}
Hence, if $R-r>2S_{2}(B_{1})^{\frac{1}{2}}$ then (\ref{prb32}) and
(\ref{prb33}) imply that $S_{2}(U)=S_{2}(A_{R,r})<1.$
\end{example}

Thus, we can replace the condition $S(U)<1$ in \textsf{(}$\mathrm{H}_{2}%
$\textsf{)} by either $R>S_{2}(B_{1})^{\frac{1}{2}}$ when $U=B_{R}(y)$ or
$R-r>2S_{2}(B_{1})^{\frac{1}{2}}$ when $U=A_{R,r}.$

The main result this section is the following.

\begin{theorem}
\label{lemb17} Assume \textrm{(}$\mathrm{H}_{1}$\textrm{)} and \textrm{(}%
$\mathrm{H}_{2}$\textrm{)}. Then the problem (\ref{prb23}) has at least one
positive ground state solution.
\end{theorem}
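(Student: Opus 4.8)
The plan is to mimic the argument used for Theorem \ref{lemb8}, but carried out in $H_0^1(G)$ and using the new energy bound $m<\frac{1}{N}S^{N/2}$ coming from the lemma just proved under $\mathrm{(H}_1\mathrm{)}$ and $\mathrm{(H}_2\mathrm{)}$. First I would invoke Lemma \ref{lemb12}(iii) to obtain a nonnegative sequence $(u_n)\subset\mathcal{N}$ with $I(u_n)\to m$ and $I'(u_n)\to 0$ in $H^{-1}$. As in Section \ref{Sec3}, the identity
\[
I(u_n)-\tfrac{1}{p^-}I'(u_n)(u_n)=\left(\tfrac12-\tfrac{1}{p^-}\right)\|u_n\|^2+\int_G\left(\tfrac{1}{p^-}-\tfrac{1}{p(x)}\right)(u_n^+)^{p(x)}\,\mathrm{d}x
\]
shows $(u_n)$ is bounded in $H_0^1(G)$, so up to a subsequence $u_n\rightharpoonup u$ in $H_0^1(G)$, $u_n\to u$ in $L^s(G)$ for $1\le s<2^\ast$, $u_n\to u$ a.e., and $|\nabla u_n|^2\rightharpoonup\mu$, $|u_n|^{2^\ast}\rightharpoonup\nu$ in $\mathcal{M}(\overline{G})$; the weak limit $u$ is then automatically a (nonnegative) critical point of $I$, hence by $\mathrm{(H}_2\mathrm{)}$ and the strong maximum principle $u$ is positive if $u\not\equiv 0$.

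The heart of the proof is showing $u\not\equiv 0$, and this is where the concentration-compactness analysis must be redone for a bounded domain. Assuming $u\equiv 0$, I would first rule out interior concentration exactly as in Section \ref{Sec3}: for $x_i\in G$, testing $I'(u_n)$ against $\varphi_\epsilon u_n$ with $\varphi_\epsilon$ a cutoff supported near $x_i$ gives $\mu(\{x_i\})\le\nu_i$, which together with Lions' ii) forces $\nu_i\ge S^{N/2}$; then the identity
\[
I(u_n)=\tfrac{1}{N}\|u_n\|^2+\int_{\Omega_\delta}\left(\tfrac{1}{2^\ast}-\tfrac{1}{p(x)}\right)(u_n^+)^{p(x)}\,\mathrm{d}x
\]
combined with the fact (argued via continuity of $p$ and strong $L^s_{loc}$ convergence, exactly as in Section \ref{Sec3}) that the $\Omega_\delta$-integral tends to $0$ yields $m\ge\frac{1}{N}\mu(\{x_i\})\ge\frac{1}{N}S^{N/2}$, contradicting the preceding lemma. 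The main obstacle, absent in the $\mathbb{R}^N$ case, is possible concentration on the boundary $\partial G$: Lions' lemma on a bounded domain allows atoms $x_i\in\overline{G}$, and one must either use the boundary version of the concentration-compactness principle (where a boundary atom still satisfies $\mu(\{x_i\})\ge S_0\,\nu_i^{2/2^\ast}$ for some $S_0\le S$, but in fact one can work with the half-space Sobolev constant, or simply with $S$ since $\Omega_\delta\subset G$ and $p\equiv 2^\ast$ only on $G\setminus\Omega_\delta$) — in any case, the same cutoff computation localized at a boundary point gives $\nu_i\ge S^{N/2}$ and the same contradiction with $m<\frac{1}{N}S^{N/2}$. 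Once $J=\emptyset$ on all of $\overline{G}$, Lions' i) gives $u_n\to 0$ in $L^{2^\ast}(G)$, hence $\int_G(u_n^+)^{p(x)}\to 0$ over $\Omega_\delta$ and, since $p\equiv 2^\ast$ on $G\setminus\Omega_\delta$, $\|u_n\|^2=\int_G(u_n^+)^{p(x)}\to 0$; but then $m=\lim I(u_n)=0$, contradicting $m>0$.

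This contradiction establishes $u\not\equiv 0$. It remains to verify $I(u)=m$, and here I would copy the final paragraph of the proof of Theorem \ref{lemb8} verbatim with $D^{1,2}(\mathbb{R}^N)$ replaced by $H_0^1(G)$: from
\[
I(u_n)=\left(\tfrac12-\tfrac{1}{p^-}\right)\|u_n\|^2+\int_G\left(\tfrac{1}{p^-}-\tfrac{1}{p(x)}\right)(u_n^+)^{p(x)}\,\mathrm{d}x,
\]
weak lower semicontinuity of the norm together with Fatou's lemma (the integrand is nonnegative since $p(x)\ge p^-$) gives $m\ge\left(\tfrac12-\tfrac{1}{p^-}\right)\|u\|^2+\int_G\left(\tfrac{1}{p^-}-\tfrac{1}{p(x)}\right)(u^+)^{p(x)}\,\mathrm{d}x=I(u)-\tfrac{1}{p^-}I'(u)(u)=I(u)\ge m$, so $I(u)=m$. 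By Lemma \ref{lemb12}(ii) (or the analogue of Proposition \ref{lemb5}), $I'(u)=0$, and by the maximum principle $u>0$ in $G$; thus $u$ is a positive ground state solution of (\ref{prb23}), which completes the proof.
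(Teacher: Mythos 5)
Your proposal is correct and follows essentially the same route as the paper, whose proof of Theorem \ref{lemb17} simply extracts the Palais--Smale sequence from Lemma \ref{lemb12}(iii) and then says ``arguing as in Section \ref{Sec3}'' one gets $u\not\equiv 0$, $I'(u)=0$ and $I(u)=m$, the new ingredient being the bound $m<\frac{1}{N}S^{N/2}$ from (H$_2$). Your write-up supplies the details the paper leaves implicit --- in particular the treatment of possible concentration at boundary atoms (correctly resolvable by zero-extension to $D^{1,2}(\mathbb{R}^N)$, so the constant $S$ still applies) and the slightly simpler final contradiction $m=\lim I(u_n)=0$ once $J=\emptyset$ --- but these are refinements within the same argument.
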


\begin{proof}
According to item \textrm{(iii)} of Lemma \ref{lemb12}, there exists a
sequence $(u_{n})\subset\mathcal{N}$ satisfying $I(u_{n})\rightarrow m$ and
$I^{\prime}(u_{n})\rightarrow0$ em $H^{-1}$. Since $(u_{n})$ is bounded in
$H_{0}^{1}(G)$, there exist $u\in H_{0}^{1}(G)$ and a subsequence, still
denoted by $\left(  u_{n}\right)  ,$ such that $u_{n}\rightharpoonup u$ in
$H_{0}^{1}(G),$ $u_{n}\rightarrow u$ in $L^{p}(G),$ for $1\leq p<2^{\ast},$
and $u_{n}(x)\rightarrow u(x)\,\mathrm{a.e.}$ in $G.$ Arguing as in Section
\ref{Sec3}, we can prove that $u\not \equiv 0$, $I^{\prime}(u)=0$ and
$I(u)=m$, showing thus that $u$ is a ground state solution of (\ref{prb23}).
\end{proof}

\section{Acknowledgements}

C. O. Alves was partially supported by CNPq/Brazil (304036/2013-7) and
INCT-MAT. G. Ercole was partially supported by CNPq/Brazil (483970/2013-1 and
306590/2014-0) and Fapemig/Brazil (APQ-03372-16).

%\subsection{Subsection}\label{sec:nada}

%\subsubsection{Subsubsection}\label{sec:nada2}

%Bibliograf�a.
%-----------------------------------------------------------------

\end{document}